\documentclass[reqno,12pt]{amsart}%
\usepackage{amsfonts}
\usepackage{amsmath}
\usepackage{amssymb}

\setlength{\parindent}{0in} \setlength{\parskip}{0.5\baselineskip}
\setlength{\footskip}{30pt} \tolerance=1000
\setlength{\voffset}{-0.5in} \setlength{\hoffset}{-0.5in}
\setlength{\textheight}{9in} \setlength{\textwidth}{6in}

\def\bE{{\mathbb{E}}}

\def\bR{{\mathbb{R}}}
\def\bRd{{\mathbb{R}^{\mathrm{d}}}}

\def\cE{{\mathcal{E}}}

\def\cJ{{\mathcal{J}}}
\def\cC{{\mathcal{C}}}
\def\cL{{\mathcal{L}}}

\def\ba{\boldsymbol{\alpha}}

\newcommand\zm{(\boldsymbol{0})}
\newcommand\km{\boldsymbol{\epsilon}(k)}

\def\mfm{\mathfrak{m}}
\def\mfq{\mathfrak{q}}
\def\Hep{{\mathrm{H}}}

\newcommand\mfk[1]{\mathfrak{#1}}

\def\ba{\boldsymbol{\alpha}}

\newtheorem{theorem}{Theorem}
\theoremstyle{plain}
\newtheorem{corollary}[theorem]{Corollary}
\newtheorem{definition}[theorem]{Definition}
\newtheorem{proposition}[theorem]{Proposition}

\newtheorem{remark}[theorem]{Remark}

\newcommand\bel[1]{\begin{equation}\label{#1}}
\newcommand\ee{\end{equation}}

\numberwithin{theorem}{section}
\numberwithin{equation}{section}

\begin{document}
\title[Fractional SDEs]
{Classical and Generalized Solutions of Fractional Stochastic Differential
Equations}

\author{S. V. Lototsky}
\curraddr[S. V. Lototsky]{Department of Mathematics, USC\\
Los Angeles, CA 90089}
\email[S. V. Lototsky]{lototsky@math.usc.edu}
\urladdr{http://www-bcf.usc.edu/$\sim$lototsky}

\author{B. L. Rozovsky}\thanks{Research supported by ARO Grant W911N-16-1-0103}
\curraddr[B. L. Rozovsky]{Division of Applied Mathematics\\
Brown University\\
Providence, RI 02912}
\email[B. L. Rozovsky]{Boris\_Rozovsky@Brown.edu}
\urladdr{www.dam.brown.edu/people/rozovsky/rozovsky.htm}

\subjclass[2010]{Primary 60G22; Secondary 26A33, 60G20}

 \keywords{Anomalous Diffusion,
 Caputo Derivative, Chaos Expansion, Gaussian Volterra Process,
 Stochastic Parabolicity Condition}

\begin{abstract}
For stochastic evolution equations with fractional derivatives,
classical solutions exist when the order of the time derivative of the
unknown function  is not too small compared to the order of the time derivative
of the noise; otherwise, there can be  a generalized solution in suitable
weighted chaos spaces. Presence of fractional  derivatives in time leads to
various modifications of the stochastic parabolicity condition. 
Interesting new effects appear when
the order of the time derivative in  the noise term is less than or equal to
 one-half.
\end{abstract}

\maketitle

\today

\section{Introduction}
Given a $\beta\in (0,1)$, and a smooth function
 $f=f(t),\  t>0$, the two most popular definitions of
 the derivative of order $\beta$   are
 Riemann-Liouville
 $$
 D^{\beta}_tf(t)=\frac{1}{\Gamma(1-\beta)}
\frac{d}{dt}\int_0^t  (t-s)^{-\beta} f(s) \, ds\
$$
and Caputo
$$
  \tilde{\partial}^{\beta}_tf(t)=\frac{1}{\Gamma(1-\beta)}
\int_0^t  (t-s)^{-\beta} f'(s)\big) \, ds;
 $$
 \bel{Gamma}
 \Gamma(z)=\int_0^{+\infty} t^{z-1}e^{-t}\, dt.
 \ee

 The Riemann-Liouville derivative can be considered a
  true extension of the usual derivative to fractional orders.
  For example, a function does not have to be continuously
 differentiable to have  Riemann-Liouville derivatives of order $\beta<1$
 \cite{FD-counterex}. On the other hand,
 the Caputo derivative is more convenient in
 initial-value problems, with no need for fractional-order initial
 conditions \cite[Section 2.4.1]{Podlubny}.

 The Kochubei extension of the  Caputo derivative,
$$
\partial^{\beta}_tf(t)=\frac{1}{\Gamma(1-\beta)}
\frac{d}{dt}\int_0^t  (t-s)^{-\beta} \big(f(s)-f(0+)\big) \, ds,
$$
$f(0+)=\lim_{t\to 0,t>0} f(t)$, seems to achieve the right balance
 between  mathematical utility and physical relevance  \cite{Kochubei}
 and has been recently used in the study of large classes of stochastic partial
 differential  equations \cite{Kim-Fract, LRdS}.

Let $w=w(t), \ t\geq 0,$ be a standard Brownian motion
on a stochastic basis $(\Omega,\mathcal{F},
\{\mathcal{F}_t\}_{t\geq 0},\mathbb{P})$ satisfying the usual
conditions. The objective of this paper is to address fundamental
questions about existence and regularity of solution for equations of the type
\bel{eq:main0000}
\partial^{\beta}_t X(t) = aX(t) +\partial^{\gamma}_t\int_0^t \big(\sigma X(s)
+g(s)\big)\, dw(s),\ t>0,\ \ a,b\in \bR.
\ee
With a suitable choice of $a$ and $\sigma$, \eqref{eq:main0000} covers the time-fractional versions of the Ornstein-Uhlenbeck process and the geometric Brownian motion, as well as certain evolution equations in function spaces.
The emphasis is on derivation and analysis of explicit formulas for the solution,
 as opposed to the development of general theory.

Given the vast literature on the subject of fractional differential equations,
Section  \ref{sec:Bg} provides the necessary background, to make
the presentation reasonably self-contained. Section \ref{sec:FDBM}
investigates the  equation with $a=\sigma=0$, corresponding to fractional
derivatives or integrals of the Brownian motion, followed
by the time fractional Ornstein-Uhlenbeck process ($\sigma=0$)
 in Section \ref{sec:TFOU} and the geometric Brownian motion
 in Section \ref{sec:TFGBM}. Along the way we understand  the
 origins of the condition $\beta-\gamma>-1/2$ at a more basic
 level than in \cite{Kim-Fract, LRdS} and discover that the
 fractional in time Ornstein-Uhlenbeck process can exhibit the
 full range of sub-diffusive behaviors, including  super-slow
 logarithmic.  Section \ref{sec:SFPC} investigates an
 SPDE version of \eqref{eq:main0000} by replacing the numbers
 $a,\sigma$ with fractional powers of the Laplace operator. Then the
 results of the previous sections lead to
 several  versions of the  stochastic parabolicity condition.

Throughout the paper,
 $\cC(G)$ denotes the space of real-valued continuous functions on $G$
 and  $\cC_{loc}(G)$ is the space of functions that are
  continuous on every compact sub-set of $G$;  $\Gamma=\Gamma(z)$,
  is    the Gamma function, defined for
all complex $z$ except for the poles at $0,-1,-2,\ldots$ and,
for $z$ in the right half-plane, having the representation \eqref{Gamma}.
Most of other notations, such as $\cL[\cdot]$ and $\cE$ for the Laplace transform and its domain, and $E_{\beta,\rho}$ for the two-parameter
 Mittag-Leffler function, are introduced in Section \ref{sec:Bg}.

\section{Background}
\label{sec:Bg}
 \subsection{Fractional Derivatives and Integrals}

  In this section we do not indicate the time
  variable as a subscript in the notations of the derivatives: for $\beta\in (0,1),$
  \begin{align}
  \label{RL-d1}
  D^{\beta}f(t)&=\frac{1}{\Gamma(1-\beta)}
\frac{d}{dt}\int_0^t  (t-s)^{-\beta} f(s) \, ds,\\
\label{C-d1}
  \partial^{\beta}f(t)&=\frac{1}{\Gamma(1-\beta)}
\frac{d}{dt}\int_0^t  (t-s)^{-\beta}\big( f(s)-f(0)\big) \, ds.
  \end{align}
  We also introduce the corresponding  fractional integrals: for $p>0$,
  \begin{align}
  \label{RL-int}
  I^{p}f(t)&=\frac{1}{\Gamma(p)}
 \int_0^t  (t-s)^{p-1} f(s) \, ds,\\
 \label{C-int}
  J^{p}f(t)&=\frac{1}{\Gamma(p)}
 \int_0^t  (t-s)^{p-1} \big(f(s)-f(0+)\big)\, ds,
  \end{align}
where  $f(0+)=\lim_{t\to 0} f(t)$.
 In what follows, with all functions defined only for $t>0$, we
 write $f(0)$  instead of $f(0+)$
By convention, $I^0f(t)=f(t),\ J^0f=f(t)-f(0)$.
In particular, for the constant function $f(t)=1$, $t\geq 0$,
 \bel{der-const}
 I^{p}[1](t)=\frac{t^p}{\Gamma(p+1)},\ \ \ J^p[1](t)=0,\  \ \ D^{\beta}[1](t)=\frac{t^{-\beta}}{\Gamma(1-\beta)},\ \ \
 \partial^{\beta}[1](t)=0.
 \ee

Formulas \eqref{RL-d1}--\eqref{C-int} imply
 \begin{align}
 \label{RL-DI}
 D^{\beta}f(t)&=\frac{d}{dt} I^{1-\beta}f(t),\\
 \label{C-DJ}
 \partial^{\beta}f(t)&=\frac{d}{dt} J^{1-\beta}f(t).
 \end{align}

 \begin{proposition}
 For all $p,q>0$,
 \bel{IJ-sg}
 I^p(I^qf)=I^{p+q}f,\ J^p(J^qf)=J^{p+q}f.
 \ee
 \end{proposition}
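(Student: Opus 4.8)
The plan is to prove the Riemann--Liouville identity $I^p(I^qf)=I^{p+q}f$ first by a direct computation with Fubini and the Beta integral, and then to deduce the Kochubei--Caputo identity $J^p(J^qf)=J^{p+q}f$ from it by reducing to the already-known action on constants.

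For the first identity I would insert the definition \eqref{RL-int} of $I^q$ into that of $I^p$, writing $I^p(I^qf)(t)$ as the iterated integral
\[
\frac{1}{\Gamma(p)\Gamma(q)}\int_0^t\!\!\int_0^s (t-s)^{p-1}(s-u)^{q-1}f(u)\,du\,ds .
\]
Since for $p,q>0$ and locally bounded $f$ the integrand is absolutely integrable over the triangle $\{0\le u\le s\le t\}$, Fubini lets me integrate in $s$ first over $u\le s\le t$. The substitution $s=u+(t-u)\tau$ converts the inner integral into $(t-u)^{p+q-1}$ times the Beta integral $\int_0^1(1-\tau)^{p-1}\tau^{q-1}\,d\tau=\Gamma(p)\Gamma(q)/\Gamma(p+q)$. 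The Gamma factors cancel against the prefactor, leaving precisely $\Gamma(p+q)^{-1}\int_0^t(t-u)^{p+q-1}f(u)\,du=I^{p+q}f(t)$.

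For the second identity the two key facts are the decomposition $J^pf=I^pf-f(0)\,I^p[1]$, which is immediate from comparing \eqref{RL-int} and \eqref{C-int}, and the vanishing $(J^qf)(0)=0$. The latter holds for continuous $f$ because, for small $s$ and $u\in[0,s]$, the bound $|f(u)-f(0)|\le\omega(s)$ with $\omega(s)\to 0$ gives $|J^qf(s)|\le \omega(s)\,s^{q}/\Gamma(q+1)\to 0$. Applying the decomposition to the outer operator with $g=J^qf$ and using $g(0)=0$ yields $J^p(J^qf)=I^p(J^qf)$. I would then expand $J^qf=I^qf-f(0)\,I^q[1]$, invoke linearity of $I^p$, and apply the Riemann--Liouville semigroup identity just proved to both $I^p(I^qf)=I^{p+q}f$ and $I^p(I^q[1])=I^{p+q}[1]$, where $I^{p}[1]=t^{p}/\Gamma(p+1)$ from \eqref{der-const}. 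Recombining through $I^{p+q}f-f(0)\,I^{p+q}[1]=J^{p+q}f$ gives the assertion.

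The only genuine subtlety is the justification of Fubini, which forces one to fix the admissible class of $f$ (continuous, or merely locally integrable) so that the triangular double integral converges absolutely; granting this, the Beta-integral evaluation and the estimate giving $(J^qf)(0)=0$ are entirely routine, and I anticipate no further obstacle.
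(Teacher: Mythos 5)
Your proof is correct and follows essentially the same route as the paper: the Fubini--Beta computation for $I^p(I^qf)=I^{p+q}f$, and the reduction of the $J$ identity to the $I$ identity via the observation that $(J^qf)(0+)=0$. The only cosmetic difference is that you carry out the $J$ case through the explicit decomposition $J^pg=I^pg-g(0)I^p[1]$ and linearity, whereas the paper simply notes that the same double-integral computation goes through with $f(r)-f(0)$ in place of $f(r)$.
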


 \begin{proof}
 For $I$,
 \begin{equation*}
 \begin{split}
\Gamma(p)\Gamma(q) I^p(I^qf)(t)&=
 \int_0^t\int_0^s (t-s)^{p-1}(s-r)^{q-1}f(r)\, dr\,ds\\
 &=
 \int_0^t\left(\int_r^t (t-s)^{p-1}(s-r)^{q-1}\, ds \right)f(r)\, dr\\
 &=\mathrm{B}(p,q)\int_0^t (t-r)^{p+q-1}f(r)dr=
 \Gamma(p)\Gamma(q)I^{p+q}f(t).
 \end{split}
\end{equation*}
The same computation works for $J$ after noticing that if $p>0$ and
$f(t)$ is bounded near $0$, then
\bel{lz}
\lim_{t\to 0+} |J^{p}f(t)|\leq C\lim_{t\to 0+} t^p=0.
\ee
\end{proof}

\begin{proposition}
If $f\in \cC_{loc}\big([0,+\infty)\big)$, then
\bel{DI=Id}
\partial^{\beta}I^{\beta}f(t)=D^{\beta}I^{\beta}f(t)=f(t),\ \ \ \ \
\partial^{\beta}J^{\beta}f(t)=f(t)-f(0).
\ee

\end{proposition}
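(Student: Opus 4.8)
The plan is to reduce every fractional operator to an ordinary time derivative of a fractional integral through \eqref{RL-DI} and \eqref{C-DJ}, collapse the resulting composition of two fractional integrals with the semigroup law \eqref{IJ-sg}, and then invoke the fundamental theorem of calculus, which is available because $f$ is continuous.

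For the Riemann--Liouville identity I would write, using \eqref{RL-DI}, $D^{\beta}I^{\beta}f=\tfrac{d}{dt}\,I^{1-\beta}I^{\beta}f$, and then apply \eqref{IJ-sg} with $p=1-\beta>0$, $q=\beta>0$ to get $I^{1-\beta}I^{\beta}f=I^{1}f$. Since $I^{1}f(t)=\int_0^t f(s)\,ds$ and $f\in\cC_{loc}([0,+\infty))$, the fundamental theorem of calculus gives $\tfrac{d}{dt}I^{1}f(t)=f(t)$, which is the asserted value of $D^{\beta}I^{\beta}f$.

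To pass to the Caputo operator on the same argument $g=I^{\beta}f$, the key observation is that $g(0)=I^{\beta}f(0)=0$. This follows from the estimate behind \eqref{lz}, whose proof uses only boundedness of the integrand near the origin and applies verbatim to $I^{\beta}$; concretely, $I^{\beta}f$ and $J^{\beta}f$ differ by $f(0)\,I^{\beta}[1]=f(0)\,t^{\beta}/\Gamma(\beta+1)$ by \eqref{der-const}, and both $J^{\beta}f$ and this difference vanish as $t\to 0+$. Once $g(0)=0$, the defining integrals \eqref{RL-int} and \eqref{C-int} for $I^{1-\beta}g$ and $J^{1-\beta}g$ coincide, so $\partial^{\beta}I^{\beta}f=\tfrac{d}{dt}J^{1-\beta}g=\tfrac{d}{dt}I^{1-\beta}g=D^{\beta}I^{\beta}f=f$.

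Finally, for $\partial^{\beta}J^{\beta}f$ I would run the first argument with $J$ in place of $I$: by \eqref{C-DJ} and the semigroup law $J^{1-\beta}J^{\beta}f=J^{1}f$ from \eqref{IJ-sg}, $\partial^{\beta}J^{\beta}f=\tfrac{d}{dt}J^{1}f$, and since $J^{1}f(t)=\int_0^t\big(f(s)-f(0)\big)\,ds$ the fundamental theorem of calculus yields $f(t)-f(0)$. I expect the only real point requiring care to be the vanishing of $I^{\beta}f(0)$, which is exactly what collapses the distinction between the Caputo and Riemann--Liouville left inverses and is where continuity of $f$ is used; everything else is bookkeeping on top of \eqref{IJ-sg}.
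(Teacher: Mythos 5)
Your proof is correct and follows essentially the same route as the paper: reduce via \eqref{RL-DI} and \eqref{C-DJ} to $\tfrac{d}{dt}$ of $I^{1-\beta}I^{\beta}f$ (resp. $J^{1-\beta}J^{\beta}f$), collapse with \eqref{IJ-sg}, note $I^{\beta}f(0)=0$ as in \eqref{lz}, and finish with the fundamental theorem of calculus. Your spelled-out justification that $\partial^{\beta}$ and $D^{\beta}$ agree on $g=I^{\beta}f$ because $g(0)=0$ is exactly the point the paper compresses into the phrase ``keeping in mind that $I^{\beta}f(0)=0$.''
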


\begin{proof} Using
 \eqref{RL-DI}, \eqref{C-DJ}, \eqref{IJ-sg} and keeping in mind
that, similar to \eqref{lz},
 $I^{\beta}f(0)=0$, the result follows from  the fundamental theorem of
calculus:
\begin{align*}
\partial^{\beta}I^{\beta}f(t)=D^{\beta}I^{\beta}f(t)=
\frac{d}{dt}\big(I^{1-\beta}I^{\beta}f\big)(t)
\frac{d}{dt}\big(I f\big)(t)=\frac{d}{dt}\int_0^tf(s)ds=f(t).
\end{align*}
Similarly,
\begin{align*}
\partial^{\beta}J^{\beta}f(t)=
\frac{d}{dt}\big(J^{1-\beta}J^{\beta}f\big)(t)=
\frac{d}{dt}\big(Jf\big)(t)=\frac{d}{dt}\int_0^t\big(f(s)-f(0)\big)ds
=f(t)-f(0).
\end{align*}
\end{proof}

\begin{proposition}
If
\bel{AC}
f(t)=f(0)+\int_0^t f'(s)\,ds,
\ee
then
\begin{align}
\label{IJ}
J^pf=I^{1+p}f',\\
\label{C-d2}
\partial^{\beta}f=I^{1-\beta}f'.
\end{align}
\end{proposition}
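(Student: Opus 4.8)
The plan is to prove the integral identity \eqref{IJ} first, by a direct Fubini computation, and then to deduce \eqref{C-d2} from it using \eqref{C-DJ}, the semigroup property \eqref{IJ-sg}, and the fundamental theorem of calculus. The absolute-continuity hypothesis \eqref{AC} is what makes everything go through, since it lets me replace the increment $f(s)-f(0)$ by an integral of $f'$.

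For \eqref{IJ}, I would start from \eqref{AC} in the form $f(s)-f(0)=\int_0^s f'(r)\,dr$ and substitute it into the definition \eqref{C-int} of $J^p$. This produces a double integral over the simplex $\{0\le r\le s\le t\}$, and interchanging the order of integration gives
\[
\Gamma(p)\,J^pf(t)=\int_0^t f'(r)\Big(\int_r^t (t-s)^{p-1}\,ds\Big)\,dr .
\]
The inner integral evaluates to $(t-r)^p/p$, and since $p\,\Gamma(p)=\Gamma(p+1)$, the right-hand side is exactly $\Gamma(p)\,I^{1+p}f'(t)$ by the definition \eqref{RL-int}. This is essentially the same manipulation already used in the proof of \eqref{IJ-sg}, so the only thing to check is the applicability of Fubini, which holds because $f'$ is locally integrable by \eqref{AC} and the kernel $(t-s)^{p-1}$ is integrable on $(0,t)$ for $p>0$.

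For \eqref{C-d2}, I would combine \eqref{C-DJ} with the special case $p=1-\beta$ of the identity just proved, obtaining $\partial^\beta f=\frac{d}{dt}J^{1-\beta}f=\frac{d}{dt}I^{2-\beta}f'$. Then I would factor $I^{2-\beta}f'=I^1\big(I^{1-\beta}f'\big)$ using \eqref{IJ-sg} (legitimate since $1>0$ and $1-\beta>0$ for $\beta\in(0,1)$), so that $I^{2-\beta}f'(t)=\int_0^t \big(I^{1-\beta}f'\big)(s)\,ds$. Differentiating this primitive and applying the fundamental theorem of calculus then yields $\partial^\beta f=I^{1-\beta}f'$.

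The step I expect to be the main obstacle is the final use of the fundamental theorem of calculus, which requires that $s\mapsto I^{1-\beta}f'(s)$ be regular enough (continuous, or at least that its primitive be differentiable with the expected derivative). I would handle this by observing that $I^{1-\beta}f'$ is a fractional integral of the locally integrable function $f'$, hence is itself continuous on $[0,+\infty)$ and, by an estimate of the type \eqref{lz}, vanishes at the origin; differentiating $\int_0^t\big(I^{1-\beta}f'\big)(s)\,ds$ then recovers the integrand pointwise. Everything else is routine once the order of integration is interchanged.
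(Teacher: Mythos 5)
Your proposal is correct and follows the same overall structure as the paper: establish \eqref{IJ} first and then obtain \eqref{C-d2} by differentiating the case $p=1-\beta$, factoring off one integration and invoking the fundamental theorem of calculus. The only real difference is cosmetic: the paper proves \eqref{IJ} by integrating by parts in $s$, whereas you interchange the order of integration via Fubini; both manipulations are legitimate under \eqref{AC} and lead to the same identity.
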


\begin{proof}
For \eqref{IJ}, integrate by parts:
\begin{align*}
p\Gamma(p)J^pf(t)&=\int_0^t \left(-\frac{\partial}{\partial s}
(t-s)^p\right)\left(\int_0^s f'(r)\,dr\right)\, ds\\
&=(t-s)^p\left(\int_0^s f'(r)\,dr\right)\Bigg|_{s=0}^{s=t}\!\!+
\int_0^t (t-s)^pf'(s)\,ds=\Gamma(1+p)I^{1+p}f'(t),
\end{align*}
and remember that $p\Gamma(p)=\Gamma(1+p)$.

For \eqref{C-d2}, differentiate  \eqref{IJ} taking $p=1-\beta$.
\end{proof}

\begin{corollary}
If \eqref{AC} holds, then
\bel{IC=Id}
I^{\beta}\partial^{\beta}f(t)=f(t)-f(0).
\ee
\end{corollary}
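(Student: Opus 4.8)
The plan is to reduce the statement to the two facts just established in the preceding proposition, namely the representation \eqref{C-d2} of the Caputo derivative as a fractional integral of $f'$, together with the semigroup law \eqref{IJ-sg} for the Riemann--Liouville integrals. The assumption \eqref{AC} plays a double role here: it is precisely the hypothesis under which \eqref{C-d2} holds, and it also supplies the identification of $I^{1}f'$ with $f-f(0)$ at the very end.

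Concretely, I would start from \eqref{C-d2} to write
\begin{equation*}
I^{\beta}\partial^{\beta}f(t)=I^{\beta}\big(I^{1-\beta}f'\big)(t).
\end{equation*}
Applying the semigroup identity \eqref{IJ-sg} with $p=\beta$ and $q=1-\beta$ collapses the two nested integrals into a single one of integer order,
\begin{equation*}
I^{\beta}\big(I^{1-\beta}f'\big)(t)=I^{1}f'(t)=\int_0^t f'(s)\,ds,
\end{equation*}
and then \eqref{AC} gives $\int_0^t f'(s)\,ds=f(t)-f(0)$, which is exactly the claimed identity \eqref{IC=Id}.

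There is no serious obstacle: the corollary is essentially the ``integrate then differentiate'' counterpart of the identity $\partial^{\beta}I^{\beta}f=f$ proved two propositions earlier, and the only content is the correct bookkeeping of orders in the semigroup step. The one point that deserves a word of care is the applicability of \eqref{IJ-sg} to the function $f'$: the semigroup law was proved for integrands that are bounded near the origin (so that the limit \eqref{lz} vanishes), so one should note that under \eqref{AC} the derivative $f'$ is integrable and the intermediate integral $I^{1-\beta}f'$ is well behaved near $t=0$, exactly as in the justification of \eqref{IJ}. Given that, the chain of equalities above is immediate.
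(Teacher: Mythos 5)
Your proposal is correct and follows exactly the paper's own argument: apply \eqref{C-d2} to write $\partial^{\beta}f=I^{1-\beta}f'$, collapse $I^{\beta}I^{1-\beta}f'$ to $If'$ via \eqref{IJ-sg}, and finish with \eqref{AC}. The extra remark about the integrability of $f'$ near the origin is a reasonable point of care but does not change the argument.
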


\begin{proof}
By \eqref{IJ-sg} and \eqref{C-d2},
$$
I^{\beta}\partial^{\beta}f(t) = I^{\beta}I^{1-\beta}f'(t)=
If'(t)=\int_0^t f'(s)ds=f(t)-f(0).
$$
\end{proof}

\subsection{The Laplace Transform}

Recall that
\bel{LaplTr}
f=f(t) \mapsto \cL[f](\lambda)=\int_0^{+\infty} f(t)e^{-\lambda t}\, dt
\ee
is  a one-to-one mapping defined on
\bel{ExpCl}
\mathcal{E}=\Big\{f\in L_{1,loc}((0,+\infty)):
\sup_{t>0} |f(t)|e^{ct}<\infty\ \ {\rm for \ \ some\ \ }
c\in \bR\Big\}.
\ee
We will use the following properties of the Laplace transform:
\begin{align}
\label{LT-int}
&\cL[If](\lambda)=\lambda^{-1}\cL[f](\lambda);\\
\label{LT-der}
&f'\in \cE\ \Rightarrow\  \cL[f'](\lambda)=\lambda\cL[f]-f(0);\\
\label{LT-conv}
&h(t)=\int_0^t f(t-s)g(s)\, ds \ \Rightarrow \ \cL[h](\lambda)=
\cL[f](\lambda)\,\cL[g](\lambda);\\
\label{LT-t-gm}
&f(t)=\frac{t^{\gamma-1}}{\Gamma(\gamma)}, \ \gamma>0 \
\Rightarrow \ \cL[f](\lambda)=\lambda^{\gamma}.
\end{align}

We now establish  fractional versions of \eqref{LT-int} and \eqref{LT-der}.
\begin{proposition}
\label{prop:LT}
If $f\in\cE\cap\mathcal{C}_{loc}\big([0,+\infty)\big)$
 and $\beta\in (0,1)$, then
\begin{align}
\label{LT-I}
\cL[I^pf](\lambda)&=\lambda^{-p}\cL[f](\lambda),\\
\label{LT-RL}
\cL[D^{\beta} f](\lambda)&=\lambda^{\beta}\cL[f](\lambda),\\
\label{LT-J}
\cL[J^pf](\lambda)&=\lambda^{-p}\cL[f](\lambda)-\lambda^{-p-1}f(0),\\
\label{LT-C}
\cL[\partial^{\beta} f](\lambda)&=\lambda^{\beta}\cL[f](\lambda)-
\lambda^{\beta-1}f(0).
\end{align}
\end{proposition}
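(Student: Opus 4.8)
The plan is to establish the four Laplace transform identities by combining the integral representations from the previous subsection with the elementary Laplace transform properties \eqref{LT-int}--\eqref{LT-t-gm}. The key observation is that the fractional integral $I^pf$ is a convolution. Indeed, writing $k_p(t)=t^{p-1}/\Gamma(p)$, formula \eqref{RL-int} says exactly that $I^pf(t)=\int_0^t k_p(t-s)f(s)\,ds$, so $I^pf$ is the convolution of $k_p$ with $f$. Hence by the convolution rule \eqref{LT-conv} together with the transform of the power kernel \eqref{LT-t-gm}, which gives $\cL[k_p](\lambda)=\lambda^{-p}$, I would obtain \eqref{LT-I} at once. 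One should note that \eqref{LT-int} is the special case $p=1$ of \eqref{LT-I}.

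For the Riemann--Liouville identity \eqref{LT-RL}, I would use \eqref{RL-DI}, which expresses $D^{\beta}f=\frac{d}{dt}I^{1-\beta}f$. Applying the derivative rule \eqref{LT-der} to the function $g=I^{1-\beta}f$ gives $\cL[g'](\lambda)=\lambda\cL[g](\lambda)-g(0)$; since $g(0)=I^{1-\beta}f(0)=0$ by the limit estimate noted after \eqref{lz}, and $\cL[g](\lambda)=\lambda^{-(1-\beta)}\cL[f](\lambda)$ by \eqref{LT-I}, this collapses to $\lambda\cdot\lambda^{\beta-1}\cL[f](\lambda)=\lambda^{\beta}\cL[f](\lambda)$, as desired. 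The Caputo-type identities \eqref{LT-J} and \eqref{LT-C} follow the same two steps but now the boundary term is nonzero. Writing $J^pf=I^pf-I^p[f(0)]$ and using \eqref{der-const} to compute $\cL[I^p[f(0)]](\lambda)=f(0)\,\lambda^{-p-1}$ (equivalently, $I^p$ applied to the constant $f(0)$ gives $f(0)t^p/\Gamma(p+1)$, whose transform is $f(0)\lambda^{-p-1}$ by \eqref{LT-t-gm}) yields \eqref{LT-J} directly from \eqref{LT-I}. Then \eqref{LT-C} follows by applying \eqref{C-DJ}, $\partial^{\beta}f=\frac{d}{dt}J^{1-\beta}f$, and the derivative rule \eqref{LT-der}, noting again that $J^{1-\beta}f(0)=0$ by \eqref{lz}, so that $\cL[\partial^{\beta}f](\lambda)=\lambda\cL[J^{1-\beta}f](\lambda)=\lambda^{\beta}\cL[f](\lambda)-\lambda^{\beta-1}f(0)$.

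The main point requiring care, rather than a genuine obstacle, is justifying the applications of \eqref{LT-der}, which presuppose that the relevant functions and their derivatives lie in the domain $\cE$ of the Laplace transform. Concretely, I must check that $I^{1-\beta}f$ and $J^{1-\beta}f$ are themselves of exponential type so that their transforms exist, and that their derivatives $D^{\beta}f$ and $\partial^{\beta}f$ also belong to $\cE$; the hypothesis $f\in\cE\cap\cC_{loc}([0,+\infty))$ is precisely what is needed here, since the convolution of an exponentially bounded function with the locally integrable kernel $k_p$ remains exponentially bounded, and continuity of $f$ guarantees the vanishing of the boundary terms at $t=0$ via \eqref{lz}. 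Once these domain conditions are verified, every step above is a direct substitution, and the proof is essentially a bookkeeping exercise linking the four formulas to \eqref{LT-int}--\eqref{LT-t-gm}.
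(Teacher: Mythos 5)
Your argument is correct, and for three of the four identities it is exactly the paper's route: \eqref{LT-I} from the convolution rule \eqref{LT-conv} with the kernel $t^{p-1}/\Gamma(p)$, and then \eqref{LT-RL}, \eqref{LT-C} by feeding \eqref{RL-DI}, \eqref{C-DJ} into the derivative rule \eqref{LT-der} with vanishing boundary term at $t=0$. The one place you genuinely diverge is \eqref{LT-J}: the paper proves it by writing out the double integral defining $\cL[J^pf]$ and interchanging the order of integration (a direct Fubini computation), whereas you use the algebraic decomposition $J^pf=I^pf-f(0)\,I^p[1]$ together with \eqref{der-const} and \eqref{LT-t-gm}. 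Your version is shorter and reuses \eqref{LT-I} instead of redoing an integral swap, at the small cost of having to observe that the constant function $f(0)$ and its fractional integral $f(0)\,t^p/\Gamma(p+1)$ lie in $\cE$ (they do). Note also that you correctly read \eqref{LT-t-gm} as giving $\cL[t^{\gamma-1}/\Gamma(\gamma)](\lambda)=\lambda^{-\gamma}$; the exponent sign as printed in \eqref{LT-t-gm}, and in the last line of the paper's Fubini computation for \eqref{LT-J}, is a typo, and your statement of the final identities matches the (correct) proposition. Your closing remarks on verifying that $I^{1-\beta}f$, $J^{1-\beta}f$ and their derivatives belong to $\cE$ address a point the paper leaves implicit.
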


\begin{proof}
Equality \eqref{LT-I} is an immediate consequence of \eqref{LT-conv},
and then \eqref{LT-RL} follows from \eqref{LT-der} and \eqref{RL-DI}.
To establish \eqref{LT-J}, we write
\begin{align*}
\Gamma(p)\cL[J^{p}f](\lambda)&=
\int_0^{+\infty}\int_0^t(t-s)^{p-1}\big(f(s)-f(0)\big)
e^{-\lambda t}\,dt\\
&=
\int_0^{+\infty}
\left(\int_s^{+\infty} (t-s)^{p-1}e^{-\lambda t}\, dt\right)
\big(f(s)-f(0)\big) \, ds\\
&=\int_0^{+\infty}
\left(\int_0^{+\infty} u^{p-1}e^{-\lambda u}\, dt\right)
e^{-\lambda s}\big(f(s)-f(0)\big) \, ds\\
&=
\Gamma(p)
\Big(\lambda^{p}\cL[f](\lambda)-\lambda^{p-1}f(0)\Big).
\end{align*}
Then \eqref{C-DJ} and \eqref{LT-der} imply \eqref{LT-C}.
\end{proof}

Next, we compute
 the Laplace transform of the standard Brownian motion. Define
\bel{LT-dw}
\hat{w}(\lambda)=\int_0^{+\infty}e^{-\lambda t}\,dw(t);
\ee
for every $\lambda>0$, the random variable $\hat{w}(\lambda)$
is Gaussian with mean zero and variance $1/(2\lambda)$.
Then the stochastic Fubini theorem shows that
\bel{LT-w}
\cL[w](\lambda)=\frac{\hat{w}(\lambda)}{\lambda}.
\ee

\subsection{The two parameter Mittag-Leffler function}
The function is defined by the power series
\bel{ML2}
E_{\beta,\rho}(z)
=\sum_{k=0}^{\infty} \frac{z^k}{\Gamma(\beta k+\rho)},
\ee
and, in a sense, is the fractional version of the exponential function.
If $\beta>0$, then, with  the convention $1/\Gamma(-n)=0$, $n=0,1,2,\ldots$,
the series on the right-hand side of  \eqref{ML2} converges for all
$z$  and $\rho$. The particular case $\rho=1$ is
\bel{ML}
E_{\beta,1}(z):=E_{\beta}(z)=\sum_{k=0}^{\infty} \frac{z^k}{\Gamma(\beta k+1)}.
\ee
Note that  $E_{0}=1/(1-z),\ |z|\leq1$, $E_{1}(z)=e^z$,
and $E_2(z)=\cosh(\sqrt{z})$.

\begin{proposition}[The fractional Gronwall-Bellman inequality]
\label{prop-FGBI}
If
$$
y(t)\leq A(t)+B\int_0^t (t-s)^{\beta-1}y(s)\, ds,
$$
where
$y(t)\geq 0$,
 $A(t)\geq 0$ is non-decreasing,
 $B>0$,
 $\beta>0$,
then
$$
y(t)\leq A(t)E_{\beta}\big( B\Gamma(\beta)t^{\beta}\big).
$$
\end{proposition}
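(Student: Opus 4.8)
The plan is to read the integral term as a fractional integral and then iterate. Set
$Tf(t)=B\int_0^t (t-s)^{\beta-1}f(s)\,ds=B\Gamma(\beta)I^{\beta}f(t)$,
so that the hypothesis becomes the pointwise relation $y\le A+Ty$. Because the kernel $(t-s)^{\beta-1}$ is nonnegative, $T$ is a monotone positive operator: $f\le g$ implies $Tf\le Tg$. This monotonicity, together with monotonicity of $A$, is the whole engine of the argument.

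First I would iterate. Applying $T$ to $y\le A+Ty$ and using monotonicity gives $Ty\le TA+T^2y$, hence $y\le A+TA+T^2y$; by induction,
$y(t)\le \sum_{k=0}^{n}T^kA(t)+T^{n+1}y(t)$
for every $n$. Next I would evaluate $T^k$ explicitly. By the semigroup property \eqref{IJ-sg}, $(I^{\beta})^k=I^{k\beta}$, so $T^k=(B\Gamma(\beta))^kI^{k\beta}$. Since $A$ is non-decreasing, $A(s)\le A(t)$ for $0\le s\le t$, and therefore $I^{k\beta}A(t)\le A(t)\,I^{k\beta}[1](t)=A(t)\,t^{k\beta}/\Gamma(k\beta+1)$ by \eqref{der-const}. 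This yields the key termwise estimate
$T^kA(t)\le A(t)\,(B\Gamma(\beta)t^{\beta})^k/\Gamma(\beta k+1)$,
whose sum over $k$ is, by the definition \eqref{ML}, exactly $A(t)\,E_{\beta}\big(B\Gamma(\beta)t^{\beta}\big)$.

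It remains to discard the remainder. On a fixed interval $[0,t_0]$, local boundedness of $y$ by some constant $M$ gives, by the same computation, $T^{n+1}y(t)\le M\,(B\Gamma(\beta)t_0^{\beta})^{n+1}/\Gamma(\beta(n+1)+1)$; since the Mittag-Leffler series \eqref{ML} converges for all arguments, its general term tends to $0$, so $T^{n+1}y(t)\to 0$ as $n\to\infty$. Letting $n\to\infty$ in the iterated bound then gives $y(t)\le A(t)E_{\beta}\big(B\Gamma(\beta)t^{\beta}\big)$, as claimed.

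The main obstacle I anticipate is the decay of the remainder $T^{n}y$: this requires some a priori local control of $y$ (local integrability or boundedness on compacts) so that the factorial-type growth of $\Gamma(\beta n+1)$ dominates the power growth $(B\Gamma(\beta)t^{\beta})^n$. The other point to handle carefully is passing from the pointwise hypothesis to the iterated inequality, which must invoke both the positivity of the kernel defining $T$ and the monotonicity of $A$; everything else reduces to the semigroup identity \eqref{IJ-sg} and the constant-function formula \eqref{der-const}.
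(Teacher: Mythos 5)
Your proof is correct. The paper itself gives no argument here --- it simply cites \cite[Corollary 2]{GG} --- and the iteration you carry out (monotonicity of the positive convolution operator $T$, the semigroup identity $T^k=(B\Gamma(\beta))^kI^{k\beta}$, the termwise bound $T^kA(t)\le A(t)(B\Gamma(\beta)t^{\beta})^k/\Gamma(\beta k+1)$ via monotonicity of $A$, and the vanishing remainder) is exactly the standard proof given in that reference. The only point to make explicit is the a priori local control of $y$ needed to kill the remainder $T^{n+1}y$, which you already flag; local integrability suffices (once $(n+1)\beta\ge 1$ the kernel is bounded on compacts), and it is implicit in the hypothesis that the convolution $\int_0^t(t-s)^{\beta-1}y(s)\,ds$ is finite.
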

\begin{proof}
See \cite[Corollary 2]{GG}.
\end{proof}

In general,  $E_{\beta,\rho}$ cannot be expressed using elementary
functions, but, for many purposes, the following results suffice.

\begin{proposition}
Let  $\beta\in (0,1)$ and $\rho>0$.
\begin{enumerate}
\item There exist  numbers $C_1,C_2>0$ such that, for
all $t>0$,
\bel{ML-asympt0}
|E_{\beta,\rho}(t)|\leq C_1(1+t)^{(1-\rho)/\beta}e^{t^{1/\beta}}+
\frac{C_2}{1+t}.
\ee
\item There exists a number $C$ so that, for all $t>0$,
\bel{ML-asympt1}
|E_{\beta,p}(-t)|\leq \frac{C}{1+t}.
\ee
\item Moreover, if $\rho\in (0,1)$, then
\bel{ML-asympt1-1}
\lim_{t\to+\infty} tE_{\beta,\rho}(-t)=
\frac{1}{\Gamma(\rho-\beta)},\ \beta\not=\rho;
\ee
\bel{ML-asympt1-22}
\lim_{t\to+\infty} t^2E_{\beta,\beta}(-t)=
-\frac{1}{\Gamma(-\beta)}.
\ee
\end{enumerate}
\end{proposition}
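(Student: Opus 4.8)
The plan is to read off all three statements from the classical Hankel-contour representation of the Mittag-Leffler function, the point being that $\beta\in(0,1)$ forces the critical angle $\beta\pi/2$ to lie strictly below $\pi/2$, so the positive and negative real axes sit on opposite sides of it and inherit genuinely different asymptotics. Concretely, writing each reciprocal Gamma factor in \eqref{ML2} as a Hankel integral $1/\Gamma(s)=(2\pi i)^{-1}\int e^{u}u^{-s}\,du$, substituting $u=\zeta^{1/\beta}$, and summing the geometric series $\sum_k(z/\zeta)^k$ under the integral sign yields
\[
E_{\beta,\rho}(z)=\frac{1}{2\pi i\,\beta}\int_{\gamma(\epsilon,\psi)}
\frac{\zeta^{(1-\rho)/\beta}\,e^{\zeta^{1/\beta}}}{\zeta-z}\,d\zeta,
\]
where $\gamma(\epsilon,\psi)$ is the contour made of the two rays $\arg\zeta=\pm\psi$, $|\zeta|\ge\epsilon$, joined by the arc $|\zeta|=\epsilon$, and $\psi$ is any angle with $\beta\pi/2<\psi<\beta\pi$ (note $\beta\pi<\pi$ since $\beta<1$).

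First I would record the two expansions obtained by deforming this contour. When $|\arg z|\le\psi$, pushing the rays outward sweeps the pole $\zeta=z$ and leaves a residue, giving, for every $N\ge1$,
\[
E_{\beta,\rho}(z)=\frac{1}{\beta}\,z^{(1-\rho)/\beta}\exp\!\big(z^{1/\beta}\big)
-\sum_{k=1}^{N}\frac{z^{-k}}{\Gamma(\rho-\beta k)}+O\!\big(|z|^{-1-N}\big),
\]
whereas when $\psi\le|\arg z|\le\pi$ the pole is never enclosed, the exponential term is absent, and only the algebraic tail survives,
\[
E_{\beta,\rho}(z)=-\sum_{k=1}^{N}\frac{z^{-k}}{\Gamma(\rho-\beta k)}+O\!\big(|z|^{-1-N}\big).
\]
The main obstacle, and the only place requiring real care, is justifying these two formulas (carried out e.g.\ in \cite{Podlubny}): one must bound the remaining contour integral after expanding $(\zeta-z)^{-1}$ in powers of $1/\zeta$, controlling along each ray the competition between the algebraic weight $\zeta^{(1-\rho)/\beta}$ and the rapidly varying factor $e^{\zeta^{1/\beta}}$, and verify that the residue is collected in the first regime but not in the second. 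This is precisely the mechanism that makes the exponential term appear for $z$ near the positive axis and vanish for $z$ near the negative axis.

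Granting the expansions, the three claims follow. For part (1) I take $z=t>0$ (so $|\arg z|=0<\psi$) with $N=1$: the leading term is $\beta^{-1}t^{(1-\rho)/\beta}e^{t^{1/\beta}}$, dominated by $C_1(1+t)^{(1-\rho)/\beta}e^{t^{1/\beta}}$, while the remainder is $O(t^{-1})$, dominated by $C_2/(1+t)$; since $E_{\beta,\rho}$ is entire, hence bounded near $t=0$, the inequality \eqref{ML-asympt0} holds for all $t>0$. For part (2) I take $z=-t$ (so $\arg z=\pi$, in the second regime since $\psi<\beta\pi<\pi$) with $N=1$, whence $E_{\beta,\rho}(-t)=O(t^{-1})$ at infinity and, again by continuity near the origin, $|E_{\beta,\rho}(-t)|\le C/(1+t)$, which is \eqref{ML-asympt1}. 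For part (3) I keep $z=-t$ but read the coefficients precisely: using $(-t)^{-k}=(-1)^{k}t^{-k}$, the $k=1$ term gives $tE_{\beta,\rho}(-t)\to 1/\Gamma(\rho-\beta)$ whenever $\rho\ne\beta$, establishing \eqref{ML-asympt1-1}. When $\rho=\beta$ this leading coefficient vanishes because $1/\Gamma(0)=0$ under the convention of \eqref{ML2}, so taking $N=2$ the surviving $k=2$ term, with $(-t)^{-2}=t^{-2}$, yields $t^{2}E_{\beta,\beta}(-t)\to -1/\Gamma(-\beta)$, which is \eqref{ML-asympt1-22}.
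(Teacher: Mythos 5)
Your argument is correct, and it is worth noting that the paper itself offers no proof here: it simply cites \cite{Podlubny} (Theorems 1.5, 1.6, and 1.4). What you have written is precisely the standard proof of those cited theorems --- the Hankel-contour representation of $E_{\beta,\rho}$, the dichotomy between collecting the residue at $\zeta=z$ for $|\arg z|$ below the critical angle and not collecting it above, and the resulting algebraic expansion $-\sum_{k\ge 1}z^{-k}/\Gamma(\rho-\beta k)$ on the negative axis. The deductions of the three parts from the expansions are all sound: the sign bookkeeping $-(-t)^{-1}/\Gamma(\rho-\beta)=t^{-1}/\Gamma(\rho-\beta)$ gives \eqref{ML-asympt1-1}, the vanishing of $1/\Gamma(0)$ correctly forces the passage to the $k=2$ term for \eqref{ML-asympt1-22}, and the restriction $\rho,\beta\in(0,1)$ ensures $\rho-\beta\in(-1,1)$ so no other pole of $\Gamma$ interferes. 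You are also candid that the one genuinely technical step --- bounding the remainder of the contour integral after expanding $(\zeta-z)^{-1}$ and verifying the residue dichotomy --- is deferred to the same reference; since that is exactly where the paper's citation points, your proposal carries the same net logical content as the paper while usefully exposing the mechanism (the exponential term appearing only near the positive real axis because $\beta<1$ keeps $\arg z=\pi$ outside the sector $|\arg z|\le\psi<\beta\pi$). The only cosmetic remark is that for \eqref{ML-asympt0} one should, as you do, treat bounded $t$ separately via continuity of the entire function, since the asymptotic expansion is only a statement as $t\to+\infty$.
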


\begin{proof}
See \cite[Theorem 1.5]{Podlubny},
 \cite[Theorem 1.6]{Podlubny}, and
\cite[Theorem 1.4]{Podlubny}, respectively.

\end{proof}

Next, define
$$
y_{\beta,\rho}(t)=t^{\rho-1}E_{\beta,\rho}(at^{\beta}).
$$
\begin{proposition}
For every $a\in \bR$,
the family of functions $y_{\beta,\rho}$, $\beta\in (0,1),\ \rho>0$,
has the following properties:
\begin{align}
\label{MTL-LT-gen}
\cL[y_{\beta,\rho}](\lambda)&=
\frac{\lambda^{\beta-\rho}}{\lambda^{\beta}-a};\\
\label{MTL-con-gen}
\frac{1}{\Gamma(\gamma)}
\int_0^t (t-s)^{\gamma-1}y_{\beta,\rho}(s)\, ds&=
y_{\beta,\rho+\gamma}(t),\ \gamma>0;
\\
\label{MTL-diff-gen}
D^{\gamma}y_{\beta,\rho}(t)&=y_{\beta,\rho-\gamma}(t),\
\gamma\in (0,1),\ \rho>\gamma.
\end{align}
\end{proposition}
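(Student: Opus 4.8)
The plan is to establish \eqref{MTL-LT-gen} first by a direct term-by-term computation, and then to obtain \eqref{MTL-con-gen} and \eqref{MTL-diff-gen} as consequences of the operational calculus for the Laplace transform together with its injectivity on $\cE$. Writing out the defining series,
\begin{equation*}
y_{\beta,\rho}(t)=t^{\rho-1}\sum_{k=0}^{\infty}\frac{(at^{\beta})^k}{\Gamma(\beta k+\rho)}
=\sum_{k=0}^{\infty}\frac{a^k\,t^{\beta k+\rho-1}}{\Gamma(\beta k+\rho)},
\end{equation*}
I would apply \eqref{LT-t-gm} to each monomial, so that $\cL\big[t^{\beta k+\rho-1}/\Gamma(\beta k+\rho)\big](\lambda)=\lambda^{-(\beta k+\rho)}$, and then sum the resulting geometric series:
\begin{equation*}
\cL[y_{\beta,\rho}](\lambda)=\sum_{k=0}^{\infty}a^k\lambda^{-(\beta k+\rho)}
=\lambda^{-\rho}\sum_{k=0}^{\infty}\big(a\lambda^{-\beta}\big)^k
=\frac{\lambda^{-\rho}}{1-a\lambda^{-\beta}}=\frac{\lambda^{\beta-\rho}}{\lambda^{\beta}-a}.
\end{equation*}
This is valid for $\lambda$ with $\lambda^{\beta}>|a|$, which is enough, since $\cL$ is one-to-one.

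Two points need care in this first step. To interchange the summation with the integral in the definition \eqref{LaplTr} of $\cL$, I would fix $\lambda>|a|^{1/\beta}$ and apply dominated (or monotone) convergence to the partial sums, using the absolute convergence of $\sum_k |a|^k\lambda^{-(\beta k+\rho)}$ as the dominating bound obtained after integrating $t^{\beta k+\rho-1}e^{-\lambda t}$. I also need $y_{\beta,\rho}\in\cE$ so that $\cL[y_{\beta,\rho}]$ is defined at all; this follows from the Mittag-Leffler growth estimate \eqref{ML-asympt0}, which shows $|y_{\beta,\rho}(t)|$ is of exponential order, together with $\rho>0$, which makes $t^{\rho-1}$ locally integrable near $0$.

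For \eqref{MTL-con-gen}, I would observe that by \eqref{RL-int} the left-hand side is exactly $I^{\gamma}y_{\beta,\rho}(t)$. Applying \eqref{LT-I} and then \eqref{MTL-LT-gen} gives
\begin{equation*}
\cL\big[I^{\gamma}y_{\beta,\rho}\big](\lambda)=\lambda^{-\gamma}\,\frac{\lambda^{\beta-\rho}}{\lambda^{\beta}-a}
=\frac{\lambda^{\beta-(\rho+\gamma)}}{\lambda^{\beta}-a}=\cL[y_{\beta,\rho+\gamma}](\lambda),
\end{equation*}
and since both $I^{\gamma}y_{\beta,\rho}$ and $y_{\beta,\rho+\gamma}$ lie in $\cE$, injectivity of $\cL$ yields the claimed identity. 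For \eqref{MTL-diff-gen}, I would avoid the continuity hypothesis in \eqref{LT-RL} (which can fail when $\rho-\gamma<1$) by using the factorization $D^{\gamma}=\frac{d}{dt}I^{1-\gamma}$ from \eqref{RL-DI}. Since $1-\gamma>0$, \eqref{MTL-con-gen} gives $I^{1-\gamma}y_{\beta,\rho}=y_{\beta,\rho+1-\gamma}$, and because $\rho>\gamma$ the exponent $\rho-\gamma>0$ makes $y_{\beta,\rho+1-\gamma}(t)=t^{\rho-\gamma}E_{\beta,\rho+1-\gamma}(at^{\beta})$ continuously differentiable on $[0,+\infty)$. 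Differentiating its series term by term and using $\Gamma(z+1)=z\Gamma(z)$ converts $\Gamma(\beta k+\rho-\gamma+1)$ into $\Gamma(\beta k+\rho-\gamma)$ and lowers the power by one, producing exactly $y_{\beta,\rho-\gamma}(t)$.

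The main obstacle is the justification in the first step: ensuring the term-by-term Laplace transform is legitimate and that the geometric series converges on a half-line large enough for uniqueness. Once \eqref{MTL-LT-gen} is in hand, the remaining two identities are routine applications of the operational rules \eqref{LT-I} and \eqref{RL-DI} together with the injectivity of $\cL$, the only subtlety being to route around the continuity requirement of \eqref{LT-RL} in \eqref{MTL-diff-gen} via the integrate-then-differentiate argument above.
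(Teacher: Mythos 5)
Your proof is correct and follows essentially the same route as the paper: expand $y_{\beta,\rho}$ as a series, take the Laplace transform term by term to get \eqref{MTL-LT-gen}, then derive the other two identities from the operational rules and the injectivity of $\cL$ on $\cE$. The one point where you genuinely diverge is \eqref{MTL-diff-gen}: the paper simply invokes \eqref{LT-RL}, whose stated hypothesis $f\in\cE\cap\cC_{loc}\big([0,+\infty)\big)$ is not literally satisfied by $y_{\beta,\rho}$ when $\rho<1$ (the factor $t^{\rho-1}$ blows up at the origin), whereas your detour through $D^{\gamma}=\frac{d}{dt}I^{1-\gamma}$, the already-proved \eqref{MTL-con-gen}, and term-by-term differentiation of $y_{\beta,\rho+1-\gamma}$ (continuous at $0$ precisely because $\rho>\gamma$) closes that small gap cleanly; the extra care costs nothing and makes the argument self-contained.
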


\begin{proof}
By \eqref{ML-asympt0}, $y_{\beta,\rho}\in \cE$ for
all $\beta\in (0,1),\ \rho>0$ and $a\in \bR$.
Then
\begin{align*}
\cL[y_{\beta,\rho}](\lambda)&=
\sum_{k\geq 0}\int_0^{+\infty}
\frac{a^kt^{k\beta+\rho-1}}{\Gamma(k\beta+\rho )}
\, e^{-\lambda t}\, {dt}=
\lambda^{-\rho}\sum_{k\geq 0} (a\lambda^{-\beta})^k\\
&=\frac{\lambda^{-\rho}}{1-a\lambda^{-\beta}}=
\frac{\lambda^{\beta-\rho}}{\lambda^{\beta}-a},
\end{align*}
proving \eqref{MTL-LT-gen}. Then, with \eqref{LT-t-gm} in mind,
\eqref{MTL-con-gen} and \eqref{MTL-diff-gen}
follow from \eqref{LT-I} and \eqref{LT-RL}, respectively.

\end{proof}

\subsection{Time fractional linear deterministic equations}
Consider the equation
\bel{DetEq-Gen0}
\partial^{\beta} y(t)=ay(t)+f(t),\ t>0,\ y(0)=y_0,
\ee
with $\beta\in (0,1), \ a\in \bR,\ f\in \cE\cap\cC_{loc}\big([0,+\infty)\big).$

\begin{definition}
A function $y\in \cC_{loc}\big([0,+\infty)\big)$ is called a
classical solution of \eqref{DetEq-Gen0} if
\bel{DetEq-def}
J^{1-\beta}y(t)=\int_0^t \big(ay(s)+f(s)\big)\, ds,\ t>0.
\ee
\end{definition}

The following result is the analogue of
 \cite[Example 4.3]{Podlubny}, where the Riemann-Liouville
 derivative is considered.

\begin{theorem}
\label{th:DetEq-Gen}
The unique solution of \eqref{DetEq-Gen0} in $\cE$ is
\bel{DetEq-Gen}
y(t)=y_0E_{\beta}(at^{\beta})+
\int_0^t (t-s)^{\beta-1}E_{\beta,\beta}\big(a(t-s)^{\beta}\big)
f(s)\, ds.
\ee
\end{theorem}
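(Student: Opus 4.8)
The plan is to use the Laplace transform, reducing the fractional integro-differential equation to an algebraic one and then inverting with the help of the Mittag-Leffler transform formula \eqref{MTL-LT-gen}; this is the analogue of the Riemann-Liouville computation in \cite[Example 4.3]{Podlubny}.

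First I would take the Laplace transform of the defining integral identity \eqref{DetEq-def} rather than of the equation in differential form, since \eqref{DetEq-def} is the precise meaning of a classical solution. Writing $g=ay+f$, the right-hand side is $Ig$, so \eqref{LT-int} gives $\cL[Ig]=\lambda^{-1}(a\cL[y]+\cL[f])$, while the left-hand side is handled by \eqref{LT-J} with $p=1-\beta$, namely $\cL[J^{1-\beta}y]=\lambda^{\beta-1}\cL[y]-\lambda^{\beta-2}y_0$. Equating the two and multiplying through by $\lambda$ yields $\lambda^{\beta}\cL[y]-\lambda^{\beta-1}y_0=a\cL[y]+\cL[f]$, the same relation one obtains formally from \eqref{LT-C}. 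Solving the algebraic equation gives
\begin{equation*}
\cL[y](\lambda)=\frac{\lambda^{\beta-1}}{\lambda^{\beta}-a}\,y_0+\frac{1}{\lambda^{\beta}-a}\,\cL[f](\lambda).
\end{equation*}

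Next I would invert each term using \eqref{MTL-LT-gen}, which says that $t^{\rho-1}E_{\beta,\rho}(at^{\beta})$ has transform $\lambda^{\beta-\rho}/(\lambda^{\beta}-a)$. Taking $\rho=1$ identifies the first term as the transform of $y_0E_{\beta}(at^{\beta})$, and taking $\rho=\beta$ identifies $1/(\lambda^{\beta}-a)$ as the transform of $t^{\beta-1}E_{\beta,\beta}(at^{\beta})$; the convolution rule \eqref{LT-conv} then turns the product $\cL[f]/(\lambda^{\beta}-a)$ into the transform of $\int_0^t(t-s)^{\beta-1}E_{\beta,\beta}(a(t-s)^{\beta})f(s)\,ds$. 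Thus the right-hand side of \eqref{DetEq-Gen} has exactly the transform computed above, and since the Laplace transform is one-to-one on $\cE$ (see \eqref{LaplTr}--\eqref{ExpCl}), this simultaneously proves uniqueness among solutions in $\cE$ and identifies the solution with \eqref{DetEq-Gen}.

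The step requiring the most care is the justification that these manipulations are legitimate, i.e. that every function to which a transform is applied or inverted actually lies in $\cE$. For membership I would invoke the preceding proposition, which records $y_{\beta,\rho}\in\cE$ via the Mittag-Leffler bound \eqref{ML-asympt0}; together with $f\in\cE$ this also places the convolution term in $\cE$, after noting that the $(t-s)^{\beta-1}$ singularity is integrable since $\beta>0$ and estimating the exponential growth of the convolution by that of its factors. The existence half then follows by reversing the computation: the candidate $y$ lies in $\cE\cap\cC_{loc}$, and because its transform satisfies the algebraic identity, injectivity of the transform applied to \eqref{DetEq-def} shows that $y$ is indeed a classical solution.
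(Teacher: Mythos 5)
Your proposal is correct and follows essentially the same route as the paper: Laplace-transform the integral identity \eqref{DetEq-def} using \eqref{LT-J}, solve the resulting algebraic equation for $\cL[y]$, and invert via \eqref{MTL-LT-gen} and \eqref{LT-conv} together with injectivity of the transform on $\cE$. The only difference is that you spell out the $\cE$-membership checks and the reversal step for existence, which the paper leaves implicit.
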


\begin{proof}
Take the Laplace transform on both sides of \eqref{DetEq-def}
and use \eqref{LT-J}:
$$
\lambda^{\beta-1} \cL[y](\lambda)-\lambda^{\beta-2}y_0=
\lambda^{-1}\Big( a\cL[y](\lambda)+\cL[f](\lambda)\Big),
$$
or
$$
\cL[y](\lambda)=\frac{\lambda^{\beta-1}}{\lambda^{\beta}-a}\,y_0+
\frac{\cL[f](\lambda)}{\lambda^{\beta}-a}.
$$
The conclusion of the theorem  now follows from  \eqref{LT-conv},
\eqref{MTL-LT-gen}, and  uniqueness of the Laplace transform on $\cE$.
\end{proof}

\subsection{Chaos Expansion and Generalized Processes}
Below is a summary of the construction of the
weighted chaos spaces; for details, see
\cite{LR_shir, LR_AP, LRS}.

 Introduce the following objects:
\begin{align*}
\{\mfm_k&=\mfm_k(t),\ t\in [0,T]\},\ {\rm an \
orthonormal   \ basis\ in\ } L_2((0,T)),\\
\cJ&=\left\{\ba=(\alpha_k,\, k\geq 1): \alpha_k\in \{0,1,2,\ldots\},\
|\ba|:=\sum_{k}\alpha_k<\infty\right\}, \\
\xi_{\ba}&= \prod_{k}
 \left(
 \frac{\Hep_{\alpha_{k}}(\xi_{k})}{\sqrt{\alpha_{k}!}}\right),\
\Hep_{n}(x) = (-1)^{n} e^{x^{2}/2}\frac{d^{n}}{dx^{n}}%
e^{-x^{2}/2},\
\xi_k=\int_0^{T}\mfk{m}_k(t)\,dw(t).
\end{align*}

If $\eta\in L_2^W(\Omega)$, that is,
a square-integrable functional of $w(t),\ t\in [0,T]$,
then, by  the Cameron-Martin theorem \cite{CM},
$$
\eta=\sum_{\ba\in \cJ} \bE(\eta\xi_{\ba})\, \xi_{\ba},\ \
\bE\eta^2=\sum_{\ba\in \cJ} \Big|\bE(\eta\xi_{\ba})\Big|^2;
$$
see also \cite[Theorem 5.1.12]{LR-SPDE}. For example,
\bel{w-chaos}
w(t)=\sum_{k\geq 1} \xi_k\, I\mfm_k(t)=
\sum_{k\geq 1} \xi_k\left(\int_0^t\mfm_k(s)\, ds\right).
\ee

Let  $\mfq=\{q_k,\ k\geq 1\}$ be a sequence of
positive numbers. We write
$$
\mfq^{\ba}:=\prod_{k\geq 1}q_k^{\alpha_k}.
$$

\begin{definition}
\label{def:qGP}
Let $\mfq=\{q_k,\ k\geq 1\}$ be a sequence
such that $0<q_k<1$ for all $k$.

The space $L_{2,\mfq}\big((0,T)\big)$ is  the closure
of $L_2^W\big(\Omega; L_2(0,T)\big)$ with respect to the norm
$$
\|X\|_{2,\mfq}=\left(\sum_{\ba\in \cJ}
\big\|\bE(X\xi_{\ba})\big\|_{L_2((0,T))}^2\right)^{1/2}.
$$
An element of $L_{2,\mfq}\big((0,T)\big)$  is represented by
an expression of the form
$$
X(t)=\sum_{\ba\in \cJ}x_{\ba}(t)\xi_{\ba}
$$
with non-random $x_{\ba}\in L_2((0,T))$ satisfying
$$
\sum_{\ba\in \cJ}  \mfq^{\ba}\,\|x_{\ba}\|_{L_2((0,T))}^2<\infty,
$$
and is called a {\tt  $\mfq$-generalized process}.
\end{definition}

For example, the white noise process
$$
\dot{w}(t)=\sum_{k\geq 1} \xi_k\mfm_k(t)
$$
is a $\mfq$-generalized process for every $\mfq$ satisfying
\bel{q-wn}
\sum_{k\geq 1} q_k<\infty.
\ee

\section{Fractional Derivatives of the Brownian Motion}
\label{sec:FDBM}
\begin{proposition}
If $\gamma\in (0,1)$, then
\bel{BPw-1}
\int_0^t (t-s)^{-\gamma}w(s)\,ds
=\frac{1}{1-\gamma}\int_0^t(t-s)^{1-\gamma}\,dw(s).
\ee
\end{proposition}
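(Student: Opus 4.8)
The plan is to write the left-hand side as an iterated integral and then interchange the order of integration by the stochastic Fubini theorem. Before doing so, I would record that both sides are well defined: since $w$ has continuous paths it is bounded on $[0,t]$, and $(t-s)^{-\gamma}$ is integrable on $(0,t)$ for $\gamma\in(0,1)$, so the left-hand side is a pathwise Lebesgue integral; the right-hand side is a Wiener integral of the deterministic kernel $(t-s)^{1-\gamma}$, which is square-integrable on $(0,t)$ because $2(1-\gamma)>-1$.

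First I would substitute the representation $w(s)=\int_0^s dw(r)$, so that
$$
\int_0^t (t-s)^{-\gamma}w(s)\,ds=\int_0^t (t-s)^{-\gamma}\left(\int_0^s dw(r)\right)ds.
$$
Applying the stochastic Fubini theorem to the kernel $\Phi(s,r)=(t-s)^{-\gamma}\mathbf{1}_{\{r\le s\}}$ and interchanging the outer Lebesgue integral with the inner It\^o integral gives
$$
\int_0^t\left(\int_r^t (t-s)^{-\gamma}\,ds\right)dw(r).
$$
The inner integral is elementary, $\int_r^t (t-s)^{-\gamma}\,ds=(t-r)^{1-\gamma}/(1-\gamma)$, and substituting this back yields exactly the right-hand side of \eqref{BPw-1}.

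I expect the only genuine point to verify to be the hypothesis of the stochastic Fubini theorem. The relevant condition is $\int_0^t\big(\int_0^t \Phi(s,r)^2\,dr\big)^{1/2}\,ds<\infty$, which here reduces to $\int_0^t \sqrt{s}\,(t-s)^{-\gamma}\,ds$; this is finite for every $\gamma\in(0,1)$, since the only possible singularity, at $s=t$, is of order $(t-s)^{-\gamma}$ with $\gamma<1$. It is worth emphasizing that one must use this $L^1(L^2)$ formulation rather than the cruder requirement $\int_0^t\int_0^t\Phi^2\,dr\,ds<\infty$, as the latter fails once $\gamma\ge 1/2$.

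As an alternative that avoids Fubini altogether, one can use integration by parts for the Wiener integral: with $\phi(s)=(t-s)^{1-\gamma}/(1-\gamma)$, which is continuous and monotone (hence of bounded variation) on $[0,t]$ with $\phi(t)=0$ and $\phi'(s)=-(t-s)^{-\gamma}$, one obtains $\int_0^t \phi(s)\,dw(s)=\phi(t)w(t)-\int_0^t w(s)\,d\phi(s)=\int_0^t (t-s)^{-\gamma}w(s)\,ds$, using $w(0)=0$. This route requires a short truncation-and-limit argument to justify the formula for a $\phi$ whose derivative blows up at $s=t$, which is why the Fubini computation above is the cleaner primary approach.
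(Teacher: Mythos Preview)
Your proof is correct. The paper's own proof is a single line: ``integrate by parts on the right-hand side,'' which is precisely the alternative route you sketch at the end. Your concern there is slightly overstated: the function $\phi(s)=(t-s)^{1-\gamma}/(1-\gamma)$ is absolutely continuous on $[0,t]$ (its derivative $-(t-s)^{-\gamma}$ is in $L_1((0,t))$ for $\gamma<1$, even though it is unbounded), so the integration-by-parts formula for the Wiener integral applies directly without any truncation-and-limit argument. That is why the paper can dispose of the proof in one sentence.

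Your primary approach via stochastic Fubini is a genuine alternative, and your care in isolating the $L^1(L^2)$ hypothesis $\int_0^t\sqrt{s}\,(t-s)^{-\gamma}\,ds<\infty$ rather than the cruder $L^2$ condition is well placed: the latter indeed fails once $\gamma\ge 1/2$. It is worth noting that a few lines later the paper itself invokes stochastic Fubini (for the representation of $\partial^{\gamma}_t w$) but apparently with the $L^2$ hypothesis, which is why that result is stated only for $\gamma\in(0,1/2)$. So your Fubini computation, while heavier than necessary for the present identity, makes explicit a distinction that becomes relevant immediately afterward.
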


\begin{proof}
Integrate by parts on the right-hand side of
\eqref{BPw-1}.
\end{proof}

\begin{corollary}
Given $\gamma\in (0,1)$, define
\bel{Bm-dg}
W_{\gamma}(t)=\frac{1}{(1-\gamma)\Gamma(1-\gamma)}
\int_0^t(t-s)^{1-\gamma}\,dw(s),\ t>0.
\ee
Then
\bel{Jw}
J^{1-\gamma}w(t)=W_{\gamma}(t)=I^{1-\gamma}w(t),
\ee
and
\bel{Bm-dg1}
\partial^{\gamma} w(t)=\frac{d}{dt}W_{\gamma}(t).
\ee
\end{corollary}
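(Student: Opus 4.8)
The plan is to obtain \eqref{Jw} and \eqref{Bm-dg1} by unwinding the definitions of the fractional integral and the Caputo derivative and invoking the integration-by-parts identity \eqref{BPw-1} just established. The key structural observation is that Brownian motion starts at the origin, $w(0)=0$, so that the subtraction of the initial value in the definition \eqref{C-int} of $J^{p}$ is vacuous; consequently $J^{1-\gamma}w$ and $I^{1-\gamma}w$ agree term by term, and the middle and right members of \eqref{Jw} are equal for free. This reduces everything to identifying $I^{1-\gamma}w$ with $W_{\gamma}$ and then reading off the derivative statement from the representation \eqref{C-DJ}.

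First I would write out $I^{1-\gamma}w$ from \eqref{RL-int} with $p=1-\gamma$, giving
\[
I^{1-\gamma}w(t)=\frac{1}{\Gamma(1-\gamma)}\int_0^t(t-s)^{-\gamma}w(s)\,ds.
\]
Applying \eqref{BPw-1} to the integral on the right produces the factor $(1-\gamma)^{-1}\int_0^t(t-s)^{1-\gamma}\,dw(s)$, so that the constant $\Gamma(1-\gamma)^{-1}$ from the fractional integral combines with $(1-\gamma)^{-1}$ to reproduce exactly the normalization $\big((1-\gamma)\Gamma(1-\gamma)\big)^{-1}$ appearing in the definition \eqref{Bm-dg} of $W_{\gamma}$. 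Hence $I^{1-\gamma}w(t)=W_{\gamma}(t)$, and together with $J^{1-\gamma}w=I^{1-\gamma}w$ this yields the full chain \eqref{Jw}.

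For \eqref{Bm-dg1} I would appeal to the representation \eqref{C-DJ}, which expresses the Caputo derivative as $\partial^{\gamma}f=\frac{d}{dt}J^{1-\gamma}f$. Substituting $f=w$ and using $J^{1-\gamma}w=W_{\gamma}$ from \eqref{Jw} gives $\partial^{\gamma}w(t)=\frac{d}{dt}W_{\gamma}(t)$ at once. The only point requiring care—and the mildest of obstacles—is interpretive rather than computational: since $w$, and hence $W_{\gamma}$, is not classically differentiable, the symbol $\frac{d}{dt}W_{\gamma}$ must be read as the same formal object that defines $\partial^{\gamma}w$ through \eqref{C-DJ}, so that \eqref{Bm-dg1} is an identity of expressions rather than an assertion that a pathwise classical derivative exists. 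No genuine analytic difficulty arises, because all the probabilistic content, namely the stochastic integration by parts, has already been absorbed into the Proposition yielding \eqref{BPw-1}.
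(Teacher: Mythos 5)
Your proof is correct and follows exactly the paper's route: it deduces \eqref{Jw} from the integration-by-parts identity \eqref{BPw-1} together with $w(0)=0$ (which makes $J^{1-\gamma}w=I^{1-\gamma}w$), and then obtains \eqref{Bm-dg1} from \eqref{C-DJ}. The constant-matching and the interpretive remark about $\frac{d}{dt}W_{\gamma}$ are just slightly more explicit versions of what the paper leaves implicit.
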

\begin{proof}
Equality \eqref{Jw} follows from \eqref{BPw-1}, keeping in mind that $w(0)=0$. After that,  \eqref{C-DJ} implies \eqref{Bm-dg1}.
\end{proof}

\begin{definition}
A process $V=V(t),\ t\in [0,T],$
 is called a {\tt Gaussian Volterra process}
with kernel $K=K(t,s)$ if there exists a non-random
function $K=K(t,s)$ such that
$K(t,s)=0, s>t$, $K\in L_2((0,T)^2)$, and
$$
\mathbb{P}\Big(V(t)=\int_0^t K(t,s)\,dw(s),\ t\in [0,T]\Big)=1.
$$
\end{definition}

\begin{proposition}
\label{prop-pdW}
If
\bel{restr1}
\gamma\in (0,1/2),
\ee
then $\partial^{\gamma}_tw$ is a Gaussian Volterra process with representation
\bel{Pd-bm-reg}
\partial^{\gamma}_tw(t)=\frac{1}{\Gamma(1-\gamma)}
\int_0^t (t-s)^{-\gamma}\, dw(s).
\ee
\end{proposition}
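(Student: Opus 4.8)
The plan is to start from the identity \eqref{Bm-dg1}, which already reduces the claim to computing the time-derivative of the Gaussian process $W_\gamma$ from \eqref{Bm-dg}. I would introduce the candidate process
$$
V(t)=\frac{1}{\Gamma(1-\gamma)}\int_0^t (t-s)^{-\gamma}\,dw(s),
$$
and establish two facts: that $V$ is a genuine Gaussian Volterra process, and that $W_\gamma$ is its indefinite integral, so that $\partial^{\gamma}w=\frac{d}{dt}W_\gamma=V$, which is exactly \eqref{Pd-bm-reg}.

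First I would verify the Volterra property. The kernel is $K(t,s)=\Gamma(1-\gamma)^{-1}(t-s)^{-\gamma}$ for $s<t$ and $0$ otherwise; the evaluation $\int_0^t (t-s)^{-2\gamma}\,ds=t^{1-2\gamma}/(1-2\gamma)$ shows $K\in L_2((0,T)^2)$ precisely when $2\gamma<1$, i.e.\ under the restriction \eqref{restr1}. This is the true role of the hypothesis $\gamma<1/2$: it is the integrability threshold that makes the singular stochastic integral defining $V$ (equivalently $\partial^{\gamma}w$) well-defined in $L_2(\Omega)$ and simultaneously identifies $V$ as a square-integrable Gaussian Volterra process.

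Next I would show $\int_0^t V(r)\,dr=W_\gamma(t)$ via the stochastic Fubini theorem. Writing the left-hand side as a double integral and interchanging the $dr$ and $dw(s)$ integrations gives
$$
\int_0^t V(r)\,dr=\frac{1}{\Gamma(1-\gamma)}\int_0^t\left(\int_s^t (r-s)^{-\gamma}\,dr\right)dw(s)=\frac{1}{(1-\gamma)\Gamma(1-\gamma)}\int_0^t (t-s)^{1-\gamma}\,dw(s),
$$
where I used $\int_s^t (r-s)^{-\gamma}\,dr=(t-s)^{1-\gamma}/(1-\gamma)$; by \eqref{Bm-dg} the right-hand side is exactly $W_\gamma(t)$. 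Since $V\in L_2((0,T)\times\Omega)$, its indefinite integral $W_\gamma$ is absolutely continuous in $t$ with $\frac{d}{dt}W_\gamma=V$ for almost every $t$, and \eqref{Bm-dg1} then delivers \eqref{Pd-bm-reg}.

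The hard part will be the rigorous justification of the stochastic Fubini interchange: one must check joint measurability and $L_2$-integrability of the integrand $(r-s)^{-\gamma}\mathbf{1}_{\{s<r\}}$ on $(0,t)^2$, which again reduces to $\int_0^t\int_0^r (r-s)^{-2\gamma}\,ds\,dr<\infty$ under \eqref{restr1}. Everything else is a routine Beta-type evaluation; the heuristic of ``differentiating under the stochastic integral'' — with a vanishing boundary contribution at $s=t$, since $(t-s)^{1-\gamma}\to 0$ as $s\to t$ — is exactly what this Fubini computation makes precise.
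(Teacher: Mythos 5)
Your proposal is correct and follows essentially the same route as the paper: the core step in both is the stochastic Fubini identity $\int_0^t\big(\int_0^s (s-r)^{-\gamma}\,dw(r)\big)\,ds=\frac{1}{1-\gamma}\int_0^t (t-r)^{1-\gamma}\,dw(r)$, identifying the candidate Volterra process as the a.e.\ derivative of $W_{\gamma}=J^{1-\gamma}w$, with \eqref{restr1} entering exactly as the square-integrability threshold for the kernel. Your explicit verification that $K(t,s)=\Gamma(1-\gamma)^{-1}(t-s)^{-\gamma}\mathbf{1}_{\{s<t\}}$ lies in $L_2((0,T)^2)$ only spells out what the paper leaves as a remark about the applicability of the stochastic Fubini theorem.
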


\begin{proof}
Similar to \eqref{lz},
$$
\lim_{t\to 0+}
\int_0^t (t-s)^{-\gamma}w(s)\, ds=0
$$
with probability one. Therefore, it is enough to show that
$$
\int_0^t\left(\int_0^s (s-r)^{-\gamma}\, dw(r)\right)ds=
\frac{1}{1-\gamma}\int_0^t (t-r)^{1-\gamma}\, dw(r),
$$
which follows by the stochastic Fubini theorem; condition \eqref{restr1} is
necessary for the application of the stochastic Fubini theorem.
\end{proof}

Next, for $\beta,\gamma\in (0,1)$, consider the equation
\bel{fSODE1}
\partial^{\beta}_t X(t)=\partial^{\gamma}_tw(t), \ t>0,\  X(0)=X_0,
\ee
Using \eqref{C-DJ}, equation \eqref{fSODE1} becomes
\bel{sol0-1}
\frac{d}{dt}J^{1-\beta}X(t)=\frac{d}{dt}J^{1-\gamma}w(t).
\ee
Together with \eqref{lz}, \eqref{sol0-1} implies that equation
\eqref{fSODE1} should be interpreted as the integral equation
\bel{sol0-2}
 J^{1-\beta}X(t)= J^{1-\gamma}w(t).
 \ee

 \begin{definition}
 \label{def:sol00}
  A classical solution of \eqref{fSODE1}
 on $[0,T]$ is
 a continuous process $X=X(t)$ such that
 $$
 \mathbb{P}\Big( J^{1-\beta}X(t)= J^{1-\gamma}w(t),\
 t\in [0,T]\Big)=1.
 $$
 \end{definition}

\begin{theorem}
\label{th:FD-w}
If
\bel{restr2}
\beta-\gamma>-\frac{1}{2},
\ee
then
\bel{fSODE-sol1}
X(t)=X_0+\frac{1}{\Gamma(1+\beta-\gamma)}
\int_0^t (t-s)^{\beta-\gamma}dw(s)
\ee
is the unique classical solution of \eqref{fSODE1}.
\end{theorem}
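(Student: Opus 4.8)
The plan is to verify directly that the process in \eqref{fSODE-sol1} satisfies the defining identity $J^{1-\beta}X(t)=J^{1-\gamma}w(t)$ of Definition \ref{def:sol00}, comparing Laplace transforms and using that $\cL$ is injective on $\cE$. First I would determine which transform a solution must have. Applying \eqref{LT-J} to both sides of \eqref{sol0-2}, and using $w(0)=0$ together with \eqref{LT-w}, gives
$$
\lambda^{\beta-1}\cL[X](\lambda)-\lambda^{\beta-2}X_0=\lambda^{\gamma-1}\cL[w](\lambda)=\lambda^{\gamma-2}\hat{w}(\lambda),
$$
so that any solution in $\cE$ must satisfy
$$
\cL[X](\lambda)=\lambda^{-1}X_0+\lambda^{\gamma-\beta-1}\hat{w}(\lambda).
$$
The first term is the transform of the constant $X_0$ (since $\cL[1](\lambda)=\lambda^{-1}$), so the task reduces to showing that the stochastic convolution in \eqref{fSODE-sol1} has Laplace transform $\lambda^{\gamma-\beta-1}\hat{w}(\lambda)$.

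For that I would compute the transform of $Y(t):=\Gamma(1+\beta-\gamma)^{-1}\int_0^t(t-s)^{\beta-\gamma}\,dw(s)$ by interchanging the $dt$ and $dw(s)$ integrations through the stochastic Fubini theorem. Writing $\cL[Y](\lambda)=\int_0^{+\infty}e^{-\lambda t}Y(t)\,dt$ and substituting $u=t-s$ in the resulting inner time integral, the deterministic factor becomes the transform of $t^{\beta-\gamma}/\Gamma(1+\beta-\gamma)$, equal to $\lambda^{\gamma-\beta-1}$ by \eqref{LT-t-gm}, while the remaining stochastic factor is $\int_0^{+\infty}e^{-\lambda s}\,dw(s)=\hat{w}(\lambda)$ by \eqref{LT-dw}. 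Hence $\cL[Y](\lambda)=\lambda^{\gamma-\beta-1}\hat{w}(\lambda)$, matching the required transform; injectivity of $\cL$ on $\cE$ then yields existence, and, applied to the difference $X_1-X_2$ of two classical solutions sharing the initial value $X_0$ (for which the identity forces $\cL[X_1-X_2]=0$), uniqueness.

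The crux is not this formal computation but the requirement in Definition \ref{def:sol00} that $X$ be a genuine continuous process, and this is exactly where \eqref{restr2} enters. The integral $\int_0^t(t-s)^{\beta-\gamma}\,dw(s)$ defines a square-integrable Gaussian variable precisely when $s\mapsto(t-s)^{\beta-\gamma}$ lies in $L_2((0,t))$, i.e.\ when $\int_0^t(t-s)^{2(\beta-\gamma)}\,ds<\infty$, which holds if and only if $2(\beta-\gamma)>-1$, that is $\beta-\gamma>-1/2$; the same integrability legitimizes the stochastic Fubini interchange used above. I therefore expect the main work to be the bookkeeping around \eqref{restr2}: checking that $Y$ is a well-defined Gaussian Volterra process with kernel in $L_2((0,T)^2)$, that it admits a continuous modification (via Gaussianity and a Kolmogorov-type estimate on $\bE|Y(t)-Y(r)|^2$), and that all functions involved belong to $\cE$ so that uniqueness of the Laplace transform applies. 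When $\beta-\gamma\le-1/2$ the kernel fails to be square-integrable and no classical solution of this form exists, which is the effect motivating the threshold.
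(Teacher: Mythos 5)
Your argument is correct, but it follows a genuinely different route from the paper's. The paper proves the theorem by working directly in the time domain: it applies $\partial^{1-\beta}_t$ to both sides of \eqref{sol0-2}, uses the semigroup identities to reduce everything to $X(t)-X_0=\frac{d}{dt}J^{1-(\gamma-\beta)}w(t)$, and then splits into two cases --- for $\gamma>\beta$ it invokes Proposition \ref{prop-pdW} (where \eqref{restr2} enters through the stochastic Fubini theorem), and for $\gamma\le\beta$ it differentiates under the integral and integrates by parts. You instead pass to the Laplace domain, derive $\cL[X](\lambda)=\lambda^{-1}X_0+\lambda^{\gamma-\beta-1}\hat{w}(\lambda)$, and match it against the transform of the candidate via stochastic Fubini; this is precisely the computation the paper relegates to a consistency check in \eqref{fd-BM-LT} and \eqref{fd-BM-LT1} after the proof. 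Your route has the advantage of treating the signs of $\gamma-\beta$ uniformly and of making the formula for $X$ essentially forced, and you correctly locate the role of \eqref{restr2} in the square-integrability of the kernel $(t-s)^{\beta-\gamma}$. What it costs you is some extra bookkeeping the paper avoids: injectivity of $\cL$ must be applied pathwise, so you need the paths of the stochastic convolution to lie in $\cE$ almost surely (true, since the variance grows polynomially, but it requires a Borel--Cantelli or law-of-iterated-logarithm type argument you only gesture at), and your uniqueness statement is naturally about solutions in $\cE$ on $(0,+\infty)$ rather than classical solutions on a fixed $[0,T]$ as in Definition \ref{def:sol00}; for the latter, a cleaner local argument is that two solutions with the same $X_0$ satisfy $I^{1-\beta}(X_1-X_2)=0$ on $[0,T]$, hence $I(X_1-X_2)=I^{\beta}I^{1-\beta}(X_1-X_2)=0$, hence $X_1=X_2$ by continuity. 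Both approaches are legitimate; the paper's is more self-contained on a finite interval, yours is more mechanical once the $\cE$-membership is secured.
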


\begin{proof}
Apply $\partial^{1-\beta}_t$ to both sides of
 \eqref{sol0-2} and use \eqref{DI=Id}, \eqref{IJ-sg},
 and \eqref{C-DJ}:
 \bel{sol0-3}
 \begin{split}
 X(t)-X_0&=\partial_t^{1-\beta}J^{1-\gamma}w(t)=
 \frac{d}{dt}J^{\beta}J^{1-\gamma}w(t)=
\frac{d}{dt}J^{1+\beta-\gamma}w(t)\\
&=\frac{d}{dt}J^{1-(\gamma-\beta)}w(t);
\end{split}
\ee
note that $1+\beta-\gamma>0$ for all $\beta,\gamma\in (0,1)$.
If $\gamma-\beta>0$, then
\bel{sol0-4}
\frac{d}{dt}J^{1-(\gamma-\beta)}w(t)=\partial_t^{\gamma-\beta}w(t),
\ee
and, under condition \eqref{restr2}, equality \eqref{fSODE-sol1}
follows by Proposition \ref{prop-pdW}.

If $\gamma-\beta\leq 0$, then the function
$t\mapsto J^{1-(\gamma-\beta)}w(t)$ is continuously differentiable in
$t$:
$$
J^{1-(\gamma-\beta)}w(t)=\frac{1}{\Gamma(1+|\gamma-\beta|)}
\int_0^t(t-s)^{|\gamma-\beta|}w(s)ds
$$
 so that
 $$
 \frac{d}{dt}J^{1-(\gamma-\beta)}w(t)=
 \frac{1}{|\gamma-\beta|\Gamma(1+|\gamma-\beta|)}
\int_0^t(t-s)^{|\gamma-\beta|-1}w(s)ds
 $$
 and \eqref{fSODE-sol1} follows after integration by parts.
\end{proof}

We now make the following observations;
 \begin{itemize}
 \item In an ordinary differential equation, $\beta=\gamma=1$,
so that \eqref{restr2} holds.
\item For every $t>0$, $X(t)$ is a Gaussian random variable with
variance
$$
\sigma^2(t)\varpropto \int_0^t  s^{2(\beta-\gamma)}ds
\varpropto t^{2(\beta-\gamma)+1};
$$
 $f\varpropto g$ means $f$ is proportional to $g$.
  The solution of \eqref{fSODE1} can thus exhibit
  the anomalous diffusion behavior $\sigma^2(t)\varpropto
t^{\alpha}$  for all $\alpha\in (0,3)$ \cite[Section 1.2]{FDiff-survey};
 regular diffusion
$\sigma^2(t)\varpropto t$ corresponds to $\beta=\gamma$.
\item The last equality in \eqref{sol0-3} suggests that
the solution of \eqref{fSODE1} can be written as
\bel{fSODE-sol-alt}
X(t)-X(0)=\partial_t^{\gamma-\beta}w(t),
\ee
which makes perfect sense, but requires a justification.
In particular, the proof of Theorem \ref{th:FD-w}
shows that \eqref{restr2} is
necessary for  the right-hand side of \eqref{fSODE-sol-alt} to
define a continuous process, which in this case is a Gaussian
Volterra process.
\item Taking the Laplace transform on both sides of \eqref{fSODE1},
with \eqref{LT-w} in mind, results in
\bel{fd-BM-LT}
\lambda^{\beta}\cL[X](\lambda)-\lambda^{\beta-1}X_0=
\lambda^{\gamma-1}\hat{w}(\lambda)
\ee
or
\bel{fd-BM-LT1}
\cL[X](\lambda)=\lambda^{-1}X_0+\lambda^{\gamma-\beta-1}\hat{w}
(\lambda),
\ee
which is consistent with \eqref{fSODE-sol1}.
\item Condition  \eqref{restr2} is standard in the study of
  fractional stochastic evolution equations \cite{Kim-Fract, LRdS}.
\end{itemize}

If condition \eqref{restr2} fails, so that  $\gamma-\beta\geq 1/2$,
then the solution of \eqref{fSODE1}, as defined by
 \eqref{sol0-2} or \eqref{fd-BM-LT1}, is a generalized process,
best described using weighted chaos spaces.

\begin{theorem}
\label{th:gen0}
If
\bel{FCB}
\mfm_1(t)=\frac{1}{\sqrt{T}},\
\mfm_k(t)=\sqrt{\frac{2}{T}}\cos\left(\frac{\pi t(k-1)}{T}\right),\
k\geq 2,
\ee
and
\bel{no-restr}
\gamma-\beta\geq \frac{1}{2},
\ee
then \eqref{sol0-2} defines a $\mfq$-generalized process
for every $\mfq$ satisfying
 \bel{q-bg}
\sum_k k^{2(\gamma-\beta-1)}q_k<\infty.
\ee
\end{theorem}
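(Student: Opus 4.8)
The plan is to make the chaos expansion $X(t)=\sum_{\ba\in\cJ}x_{\ba}(t)\,\xi_{\ba}$ explicit and then to check the summability condition of Definition~\ref{def:qGP} directly. Because $J^{1-\beta}$ and $J^{1-\gamma}$ act only in the time variable, they commute with the coefficient functionals $\bE[\,\cdot\,\xi_{\ba}]$; applying $\bE[\,\cdot\,\xi_{\ba}]$ to \eqref{sol0-2} and invoking \eqref{w-chaos} together with the orthonormality $\bE[\xi_{\km}\xi_{\ba}]=\delta_{\km,\ba}$ gives
$$
\bE\big[w(t)\xi_{\ba}\big]=\begin{cases}I\mfm_k(t),&\ba=\km,\\ 0,&\text{otherwise,}\end{cases}
\qquad\text{so}\qquad J^{1-\beta}x_{\ba}(t)=J^{1-\gamma}\bE\big[w(t)\xi_{\ba}\big].
$$
Hence every coefficient with $|\ba|\geq 2$ vanishes, the zero-index coefficient is the constant $x_{\zm}\equiv X_0$ (which solves $J^{1-\beta}X_0=0$ by \eqref{der-const}), and for $x_k:=x_{\km}$ I would solve $J^{1-\beta}x_k=J^{1-\gamma}I\mfm_k$. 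Since $I\mfm_k(0)=0$, formulas \eqref{RL-int}--\eqref{C-int} turn $J$ into $I$, and with $p:=1+\beta-\gamma$ the semigroup law \eqref{IJ-sg} gives $J^{1-\gamma}I\mfm_k=I^{2-\gamma}\mfm_k=I^{1-\beta}I^{p}\mfm_k=J^{1-\beta}I^{p}\mfm_k$, so that $x_k=I^{p}\mfm_k$, the $\xi_k$-coefficient of the (now only formal) integral in \eqref{fSODE-sol1}. Note that \eqref{no-restr} forces $p\in(0,\tfrac12]$.

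The crux is the frequency-decay estimate $\|x_k\|_{L_2((0,T))}\leq C\,k^{-p}$ with $C$ independent of $k$, since squaring and weighting by $\mfq$ then reproduces \eqref{q-bg}. For $k\geq 2$, set $\omega_k=\pi(k-1)/T$ and substitute $u=\omega_k(t-s)$:
$$
I^{p}\mfm_k(t)=\frac{1}{\Gamma(p)}\sqrt{\frac{2}{T}}\int_0^t(t-s)^{p-1}\cos(\omega_k s)\,ds
=\frac{\omega_k^{-p}}{\Gamma(p)}\sqrt{\frac{2}{T}}\int_0^{\omega_k t}u^{p-1}\cos(\omega_k t-u)\,du.
$$
Expanding $\cos(\omega_k t-u)$ reduces the inner integral to the truncated oscillatory integrals $\int_0^{X}u^{p-1}\cos u\,du$ and $\int_0^{X}u^{p-1}\sin u\,du$ with $X=\omega_k t$. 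As $p\in(0,\tfrac12]$, the weight $u^{p-1}$ is integrable at $0$ and decreases monotonically to $0$, so Dirichlet's test bounds both integrals uniformly in $X\geq 0$ (they even converge, to $\Gamma(p)\cos(\pi p/2)$ and $\Gamma(p)\sin(\pi p/2)$). This yields $\sup_{t\in[0,T]}|I^{p}\mfm_k(t)|\leq C\,\omega_k^{-p}\leq C'\,k^{-p}$, whence $\|x_k\|_{L_2((0,T))}\leq\sqrt{T}\,C'\,k^{-p}$; the lone term $k=1$, where $\mfm_1$ is constant and $x_1=I^{p}\mfm_1$ is bounded, is harmless.

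To finish I would assemble the weighted norm using $\mfq^{\zm}=1$ and $\mfq^{\km}=q_k$:
$$
\sum_{\ba\in\cJ}\mfq^{\ba}\,\|x_{\ba}\|_{L_2((0,T))}^2
=X_0^2\,T+\sum_{k\geq 1}q_k\,\|x_k\|_{L_2((0,T))}^2
\leq X_0^2\,T+C''\sum_{k\geq 1}k^{-2p}\,q_k,
$$
and since $-2p=2(\gamma-\beta-1)$ the right-hand side is finite exactly under \eqref{q-bg}; by Definition~\ref{def:qGP}, $X$ belongs to $L_{2,\mfq}\big((0,T)\big)$ and is thus a $\mfq$-generalized process. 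The main obstacle is the estimate of the middle paragraph: one must extract the smoothing gain $\omega_k^{-p}$ from the fractional integral \emph{and} control the truncated Fresnel-type integrals uniformly in $t$, which is what keeps $C$ independent of $k$; everything else is bookkeeping with the chaos expansion. As a consistency check, $\sum_k\|x_k\|_{L_2((0,T))}^2$ behaves like $\sum_k k^{-2p}$, which diverges precisely when $2p\leq 1$, i.e. when $\gamma-\beta\geq\tfrac12$, so the unweighted solution in $L_2^W\big(\Omega;L_2(0,T)\big)$ genuinely fails in this regime and the weights in \eqref{q-bg} are unavoidable.
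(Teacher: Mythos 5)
Your proof is correct and follows essentially the same route as the paper: you arrive at the same chaos expansion $X(t)=X_0+\sum_k\xi_k I^{1+\beta-\gamma}\mfm_k(t)$ (the paper's \eqref{chaos000}) and the same key decay estimate of order $k^{-(1+\beta-\gamma)}$ for $I^{1+\beta-\gamma}\mfm_k$ (the paper's \eqref{cos-asympt}). The only difference is that you actually prove the oscillatory-integral bound, via the substitution $u=\omega_k(t-s)$ and Dirichlet's test, where the paper cites it from Bender--Orszag.
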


\begin{proof}
Using \eqref{w-chaos}, equalities
\eqref{sol0-2} and \eqref{sol0-3} lead to
\bel{chaos000}
X(t)=X_0+\sum_k \xi_k I^{1-(\gamma-\beta)}\mfm_k(t).
\ee
By direct computation,
\bel{cos-asympt}
\left|\int_0^t (t-s)^{-(\gamma-\beta)}\cos(ks)\, ds\right|\leq Ck^{\gamma-\beta-1},
\ee
cf. \cite[Example 6.6.1]{BO-Asympt}, and then \eqref{q-bg} follows.
\end{proof}

Note that
\begin{itemize}
\item As a quick consistency check,
  the extreme case $\beta=0,\gamma=1$ in \eqref{fSODE1}
corresponds to  $X=\dot{w}$,
and then \eqref{q-bg} becomes \eqref{q-wn}.
\item The key step in the proof of Theorem \ref{th:gen0}
is asymptotic analysis, as $ k\to\infty$, of
$$
\int_0^t (t-s)^{-\kappa}\mfm_k(s)ds,\ \kappa\in (0,1),
$$
which is made possible by assumption \eqref{FCB}.
The $k^{-1+\kappa}$-asymptotic holds for other trigonometric
basis, and, while it might not hold in general, the main constructions
related to the chaos expansion, including the definition of the $\mfq$-generalized processes, are intrinsic and do not depend on the
choice of the bases, either in $L_2((0,T))$ or in $L_2^W(\Omega)$;
see \cite{Kond, LR_shir}.
\item Equality  \eqref{chaos000} also holds under \eqref{restr2}.
\end{itemize}

\begin{remark}
With obvious modifications, the results of this section extend to
equations of the type
$$
\partial^{\beta}_tX(t)=\partial^{\gamma}_t\int_0^t g(s)\, dw(s),
$$
where $g\in L_2((0,T))$. Under  condition \eqref{restr2}, the function $g$
can be random as long as $g$ is $\mathcal{F}_t$-adapted and
$$
\bE\int_0^Tg^2(t)\, dt<\infty.
$$
\end{remark}

\section{Time Fractional Ornstein-Uhlenbeck Process}
\label{sec:TFOU}
\subsection{Derivation of the equation}
Consider the harmonic oscillator
\bel{OU1}
m\ddot{x}(t)+c^2x(t)=F(t),
\ee
with a slight twist that the restoring force $-c^2x$ does not
depend on the mass $m$.

The force $F$ has two components, damping $F_d$ and external
$F_e$:
$$
F(t)=F_d(t)+F_e(t).
$$
Traditional damping is
$$
F_d(t)=-c_d\dot{x}(t),\ c_d>0.
$$
Instead, we assume that $F_d$  has memory:
\bel{OU2}
F_d(t)=-\int_0^t f_d(t-s)\dot{x}(s)\, ds.
\ee
A possible choice of the memory kernel in \eqref{OU2} is
$$
f_d(t)=At^{-\beta},\ A>0,\ \beta\in(0,1),
$$
which corresponds to a continuous time
random walk model with a heavy-tailed jump time
distribution; cf. \cite[Section 2.4]{MeerSiko}.

If we also assume that  the external force is the fractional derivative of the
standard Bronwian motion,
$$
F_e=\partial^{\gamma}_tw(t), \ \gamma\in (0,1),
$$
 then, with \eqref{C-d2} in mind, equation \eqref{OU1} becomes
\bel{OU3}
m\ddot{x}(t)+b\partial^{\beta}_tx(t)+c^2x(t)=\partial^{\gamma}_tw(t).
\ee
Now take the Laplace transform on both sides of \eqref{OU3}, with
\eqref{LT-w} in mind.  Assuming zero initial conditions, the result is
\bel{OU4}
\big(m\lambda^2+b\lambda^{\beta}+c^2\big)\, \cL[x](\lambda)=
\lambda^{\gamma-1}\hat{w}(\lambda)
\ee
or
\bel{OU5}
\cL[x](\lambda)=\frac{\lambda^{\gamma-1}\hat{w}(\lambda)}
{m\lambda^2+b\lambda^{\beta}+c^2}.
\ee
Finally, we pass to the limit $m\to 0$ in \eqref{OU5}; this procedure is
known as the Smoluchowski-Kramers
 approximation \cite{Freidlin-SK}. With $X$ denoting the
corresponding limit of $x$, the result is
\bel{OU6}
\cL[X](\lambda)=\frac{\lambda^{\gamma-1}\hat{w}(\lambda)}
{b\lambda^{\beta}+c^2},
\ee
which, back in the time domain, and with re-scaled constants,
becomes the equation describing the
{\em time fractional Ornstein-Uhlenbeck process}:
\bel{OU-main}
\partial^{\beta}_t X(t)=-aX(t)+\partial^{\gamma}_tw(t), \ a>0.
\ee

\subsection{Solution and its long-time behavior}
Similar to Definition \ref{def:sol00}, we say that a continuous process
$X=X(t)$ is a classical solution of \eqref{OU-main} on $[0,T]$ if
$$
\mathbb{P}\left(
J^{1-\beta}X(t)=-a\int_0^tX(s)\,ds + J^{1-\gamma}w(t),\
t\in [0,T]\right)=1.
$$

\begin{theorem}
\label{th:OU-00}
If \eqref{restr2} holds, then, for every $a\in \bR$, $X_0\in \bR$,
 and $T>0$, equation \eqref{OU-main} has a unique solution in the
class $\cE$ and
\bel{OU-sol}
X(t)=X_0E_{\beta}(-at^{\beta})+
\int_0^t
(t-s)^{\beta-\gamma}E_{\beta,\beta-\gamma+1}
\big(-a(t-s)^{\beta}\big)\,dw(s).
\ee
\end{theorem}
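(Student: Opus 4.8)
The plan is to mirror the proof of Theorem \ref{th:DetEq-Gen}: take the Laplace transform of the integral form of the equation, solve the resulting algebraic relation for $\cL[X](\lambda)$, and invert term by term using the Mittag-Leffler transform formula \eqref{MTL-LT-gen}. Starting from the definition of the classical solution,
\begin{equation*}
J^{1-\beta}X(t)=-a\int_0^t X(s)\,ds + J^{1-\gamma}w(t),
\end{equation*}
I would apply $\cL$ and use \eqref{LT-J} on the left (with $p=1-\beta$ and $X(0)=X_0$), \eqref{LT-int} on the drift term, and \eqref{LT-J} together with \eqref{LT-w} on the noise term (recalling $w(0)=0$), to obtain
\begin{equation*}
\lambda^{\beta-1}\cL[X](\lambda)-\lambda^{\beta-2}X_0 = -a\lambda^{-1}\cL[X](\lambda)+\lambda^{\gamma-2}\hat{w}(\lambda).
\end{equation*}
Multiplying by $\lambda$ and collecting terms gives
\begin{equation*}
\cL[X](\lambda)=\frac{\lambda^{\beta-1}}{\lambda^{\beta}+a}\,X_0+\frac{\lambda^{\gamma-1}}{\lambda^{\beta}+a}\,\hat{w}(\lambda).
\end{equation*}

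For the deterministic first term, \eqref{MTL-LT-gen} with $\rho=1$ and parameter $-a$ identifies the inverse transform as $X_0 E_{\beta}(-at^{\beta})$. For the stochastic term, the same formula with $\rho=\beta-\gamma+1$ shows that the kernel $G(t)=t^{\beta-\gamma}E_{\beta,\beta-\gamma+1}(-at^{\beta})$ has Laplace transform $\cL[G](\lambda)=\lambda^{\gamma-1}/(\lambda^{\beta}+a)$. It then remains to verify that the stochastic convolution $\int_0^t G(t-s)\,dw(s)$ has Laplace transform $\cL[G](\lambda)\hat{w}(\lambda)$; this follows from the stochastic Fubini theorem applied to $\int_0^{\infty} e^{-\lambda t}\int_0^t G(t-s)\,dw(s)\,dt$, after the change of variables $u=t-s$ factors the double integral into $\cL[G](\lambda)\int_0^{\infty} e^{-\lambda s}\,dw(s)$. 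Matching the two transforms and invoking injectivity of the Laplace transform on $\cE$ then yields both existence and uniqueness.

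The main obstacle is pinning down the role of condition \eqref{restr2}, which enters in two places. First, the stochastic convolution is well-defined as an It\^o integral only if $\int_0^t G(u)^2\,du<\infty$; since $E_{\beta,\beta-\gamma+1}(0)=1/\Gamma(\beta-\gamma+1)$ is finite, $G(u)$ behaves like $u^{\beta-\gamma}$ near $u=0$, so $G(u)^2$ is integrable precisely when $2(\beta-\gamma)>-1$, that is, exactly when \eqref{restr2} holds. Second, this same integrability is what licenses the stochastic Fubini step above. I would additionally use the decay estimate \eqref{ML-asympt1} to confirm that $E_{\beta}(-at^{\beta})$ and $G$ lie in $\cE$, so that the inversion and the uniqueness claim are legitimate within the stated class. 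The remaining work — the integration-by-parts identities feeding \eqref{LT-J} and the bookkeeping of the exponents of $\lambda$ — is routine.
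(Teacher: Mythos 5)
Your proposal is correct and follows essentially the same route as the paper's (very terse) proof: take the Laplace transform of the integral form of \eqref{OU-main}, solve for $\cL[X](\lambda)$ to get exactly \eqref{OU-LT}, and invert via \eqref{MTL-LT-gen}. The extra details you supply — the stochastic Fubini justification for $\cL\left[\int_0^{\cdot}G(\cdot-s)\,dw(s)\right]=\cL[G]\hat{w}$ and the observation that \eqref{restr2} is precisely the square-integrability of the kernel $G(u)\sim u^{\beta-\gamma}$ at $u=0$ — are exactly what the paper's one-line "inverting \eqref{OU-LT} yields \eqref{OU-sol}" leaves implicit.
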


\begin{proof}
Take the Laplace transform on both sides of \eqref{OU-main}:
\bel{OU-LT}
\cL[X](\lambda)=\frac{\lambda^{\beta-1}}{\lambda^{\beta}+a}\,X_0+
\frac{\lambda^{\gamma-1}}{\lambda^{\beta}+a}\, \hat{w}(\lambda).
\ee
If $\beta-\gamma>-1/2$, that is, \eqref{restr2} holds,
then inverting  \eqref{OU-LT} yields \eqref{OU-sol}.
\end{proof}

 Equality \eqref{OU-sol} implies that,  for every $t>0$,
 $X(t)$ is a Gaussian random variable with mean
$$
\mu(t)=X_0E_{\beta}(-at^{\beta})
$$
 and variance
\bel{OU-var}
\sigma^2(t)=\int_0^{t}
s^{2(\beta-\gamma)}E^2_{\beta,\beta-\gamma+1}(-as^{\beta})\,ds.
\ee

By  \eqref{ML-asympt1},
$$
\lim_{t\to +\infty} \mu(t)=0.
$$
To study $\sigma^2(t)$, we use
\eqref{ML-asympt1-1} with $\rho=\beta-\gamma+1$:
\bel{ML-asympt2}
\lim_{t\to+\infty} tE_{\beta,\beta-\gamma+1}(-t)=
\frac{1}{\Gamma(1-\gamma)}.
\ee
Note that \eqref{restr2} is necessary and sufficient for the convergence of
\eqref{OU-var}  at zero. On the other hand, \eqref{ML-asympt2}
implies that, depending on the values of $\gamma$,
 the integral in \eqref{OU-var}
 can either converge or diverge at infinity:
\bel{OU-asypt0}
 s^{2(\beta-\gamma)}E^2_{\beta,\beta-\gamma+1}(-as^{\beta})
 \sim s^{-2\gamma},\ s\to +\infty.
 \ee

 Using \eqref{OU-asypt0}, as well as the arguments similar to
 the proof of Theorem \ref{th:gen0}, we get the following
  characterization of the solution of
 \eqref{OU-main} for all values of $\beta,\gamma\in (0,1]$.
 \begin{theorem}
 \begin{enumerate}
 \item If $\beta-\gamma\leq -1/2$, then $X$ is a $\mfq$-generalized
 process for every $\mfq$ satisfying \eqref{q-bg};
 \item If $\beta-\gamma> -1/2$, then $X$ is a Gaussian Volterra process \eqref{OU-sol} and
     \begin{itemize}
     \item If $\gamma>1/2$, then, as $t\to +\infty$, $X(t)$ converges in distribution to a Gaussian random variable with mean zero
         and variance
\bel{FOU-LimVar}
 \sigma^2_{\infty}(a,\beta,\gamma)=\int_0^{+\infty}
s^{2(\beta-\gamma)}E^2_{\beta,\beta-\gamma+1}(-as^{\beta})\,ds;
\ee
\item If $\gamma=1/2$, then, as $t\to \infty$, $X(t)$ is
a Gaussian random variable with mean of order $t^{-\beta}$
and variance of order
$\ln t$;
\item If $\gamma<1/2$, then, as $t\to \infty$, $X(t)$ is
a Gaussian random variable with mean of order $t^{-\beta}$
 and variance of order $ t^{1-2\gamma}$.
     \end{itemize}
 \end{enumerate}
 \end{theorem}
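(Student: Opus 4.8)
The statement to be proved is really two theorems glued together along the critical line $\beta-\gamma=-1/2$. When $\beta-\gamma>-1/2$ the kernel of \eqref{OU-sol} is square-integrable, the It\^o integral makes literal sense, and $X$ is an honest Gaussian process; when $\beta-\gamma\le -1/2$ that integral diverges and $X$ survives only in a weighted chaos space. I would organize the proof accordingly: first the generalized-process regime (part (1)), then the Gaussian Volterra regime together with the fine long-time analysis (part (2)).

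For part (1) the plan is to imitate the proof of Theorem \ref{th:gen0} almost verbatim. Expanding $w$ through \eqref{w-chaos} and reading \eqref{OU-sol} (equivalently, the Laplace representation \eqref{OU-LT}) formally, one gets the first-chaos expansion
$$X(t)=X_0E_\beta(-at^\beta)+\sum_k \xi_k\, x_k(t),\qquad x_k(t)=\int_0^t (t-s)^{\beta-\gamma}E_{\beta,\beta-\gamma+1}\big(-a(t-s)^\beta\big)\mfm_k(s)\,ds.$$
Because $X$ lives in the first Wiener chaos, only multi-indices with $|\ba|\le 1$ occur, so the defining requirement $\sum_{\ba}\mfq^{\ba}\|x_{\ba}\|^2_{L_2((0,T))}<\infty$ collapses to $\|x_{\zm}\|^2+\sum_k q_k\|x_k\|^2<\infty$. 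The decisive step is the analogue of \eqref{cos-asympt} for the Mittag-Leffler kernel. I would expand $E_{\beta,\beta-\gamma+1}(-a(t-s)^\beta)$ in its power series and integrate term by term: the leading ($j=0$) term carries the endpoint singularity $(t-s)^{\beta-\gamma}/\Gamma(\beta-\gamma+1)$, whose oscillatory integral against $\mfm_k$ is of order $k^{\gamma-\beta-1}$ exactly as in \eqref{cos-asympt}, while every higher term has a strictly milder singularity at $s=t$ and a coefficient $(-a)^j/\Gamma(\beta j+\beta-\gamma+1)$ that decays super-exponentially in $j$, so the series is dominated by $j=0$ and decays faster in $k$. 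This yields $\|x_k\|_{L_2}\le C k^{\gamma-\beta-1}$, whence $\sum_k q_k\|x_k\|^2<\infty$ is precisely condition \eqref{q-bg}.

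For part (2), under \eqref{restr2} the kernel $K(t,s)=(t-s)^{\beta-\gamma}E_{\beta,\beta-\gamma+1}(-a(t-s)^\beta)$ lies in $L_2((0,T)^2)$: near $s=t$ it behaves like $(t-s)^{\beta-\gamma}$ with $2(\beta-\gamma)>-1$, and for large argument the Mittag-Leffler factor is controlled by \eqref{ML-asympt1}; hence $X$ is a Gaussian Volterra process with representation \eqref{OU-sol}, as already produced in Theorem \ref{th:OU-00}. The mean $\mu(t)=X_0E_\beta(-at^\beta)$ is handled by \eqref{ML-asympt1-1} with $\rho=1$, giving $\mu(t)$ of order $t^{-\beta}$. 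For the variance \eqref{OU-var} I would invoke the pointwise tail asymptotic \eqref{OU-asypt0} (a consequence of \eqref{ML-asympt2}), reducing the integrand for large $s$ to $s^{-2\gamma}$; the standing assumption $\beta-\gamma>-1/2$ secures integrability at the origin, so only the tail decides. If $\gamma>1/2$ the integral converges and, since the integrand is nonnegative, $\sigma^2(t)\uparrow\sigma^2_\infty(a,\beta,\gamma)$ of \eqref{FOU-LimVar}; as $X(t)$ is Gaussian with $\mu(t)\to 0$ and $\sigma^2(t)\to\sigma^2_\infty$, convergence of the characteristic functions $\exp\!\big(i\theta\mu(t)-\tfrac12\theta^2\sigma^2(t)\big)$ gives convergence in distribution to $N(0,\sigma^2_\infty)$. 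If $\gamma=1/2$ then $\int^t s^{-1}\,ds\sim\ln t$, and if $\gamma<1/2$ then $\int^t s^{-2\gamma}\,ds\sim t^{1-2\gamma}/(1-2\gamma)$, matching the stated orders.

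The main obstacle I anticipate is the first estimate in part (1): showing that multiplying the pure power kernel by the Mittag-Leffler factor does not degrade the $k^{\gamma-\beta-1}$ high-frequency rate. The cleanest route is the term-by-term integration above, supplemented by a uniform-in-$t$ bound on the tail of the power series on $[0,t]$; alternatively one may appeal directly to the endpoint-singularity asymptotics for Fourier-type integrals in \cite[Example 6.6.1]{BO-Asympt}, since only the leading singularity at $s=t$ governs the rate and it dominates the $O(k^{-1})$ contribution from the regular endpoint $s=0$ precisely because $\gamma-\beta\ge 1/2$.
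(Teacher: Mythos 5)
Your proposal is correct and follows essentially the same route as the paper, which gives only a one-sentence justification (the asymptotic \eqref{OU-asypt0} for the variance integral plus ``arguments similar to the proof of Theorem \ref{th:gen0}'' for the generalized-process regime); your term-by-term Mittag--Leffler expansion and the characteristic-function step merely fill in details the paper leaves implicit. The only minor quibble is that you invoke \eqref{ML-asympt1-1} with $\rho=1$, outside the stated range $\rho\in(0,1)$, to get the $t^{-\beta}$ decay of the mean, but the needed asymptotic $E_{\beta}(-t)\sim t^{-1}/\Gamma(1-\beta)$ is standard and is also what the paper tacitly uses.
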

In particular, for $\beta\in(0,1] $ and $\gamma\in (0,1/2)$, the
long-time behavior of $X$ corresponds to that of a
{sub-diffusion}; see, for example,  \cite{Sokolov-1}
or \cite[Section 6]{MK-chem}.
 For $\gamma=1/2$, the result is an ultra-slow,
or Sinai-type, diffusion \cite{Sinai-diffusion}.

\begin{remark}
Different physical considerations  lead to alternative forms of
the time-fractional Ornstein-Uhlenbeck process: see, for example
\cite{Kwok} and references therein.
\end{remark}

\section{Time Fractional Geometric Brownian Motion}
\label{sec:TFGBM}

Similar to the geometric Brownian motion
$$
dx(t)=ax(t)dt+\sigma x(t)dw(t),
$$
which is
\bel{GBM}
x(t)=x(0)\exp\left(\left(a-\frac{\sigma^2}{2}\right)t+\sigma w(t)\right),
\ee
define the time  fractional geometric Brownian motion as the
solution of the equation
\bel{FGBM}
\partial^{\beta}_tX(t)=aX(t)+\sigma
\partial_t^{\gamma}\int_0^tX(s)\,dw(s),
\ t>0,\ \ \beta,\gamma\in (0,1),
\ee
with non-random initial condition $X(0)=X_0$.

By Theorem \ref{th:FD-w}, if  $\gamma-\beta<1/2$, then \eqref{FGBM}
 is equivalent to the integral equation
\bel{FGBM-I}
X(t)=X_0+
\frac{a}{\Gamma(\beta)}\int_0^t (t-s)^{\beta-1}X(s)\,ds+
\frac{\sigma}{\Gamma(1+\beta-\gamma)}
\int_0^t (t-s)^{\beta-\gamma}X(s)\,dw(s);
\ee
using \eqref{OU-sol}, we get  a different, but equivalent,  equation
\bel{FGBM-I-OU}
X(t)=X_0E_{\beta}(at^{\beta})+
\sigma \int_0^t
(t-s)^{\beta-\gamma}E_{\beta,\beta-\gamma+1}
\big(a(t-s)^{\beta}\big)X(s)\,dw(s).
\ee
Accordingly, we define a classical solution of \eqref{FGBM}
on $[0,T]$  as a continuous process $X=X(t)$
such that, for all $t\in [0,T]$,  $\bE X^2(t)<\infty$,
$X(t)$ is $\mathcal{F}_t$-measurable,  and
\eqref{FGBM-I} holds with probability one.

Because  a closed-form expression of the type \eqref{GBM} is currently
not available for $X(t)$,  we will study  \eqref{FGBM} using chaos
expansion.

To simplify the notations, let
\bel{FSol}
\Phi(t)=t^{\beta-\gamma}E_{\beta,\beta-\gamma+1}(at^{\beta}).
\ee

\begin{theorem}
\label{th:FGBM}
Under condition \eqref{restr2}, equation \eqref{FGBM}
has a unique classical solution for every $T>0$ and $X_0,a,\sigma\in \bR$, with chaos expansion
$$
X(t)=\sum_{\ba\in \cJ} X_{\ba}(t) \xi_{\ba},
$$
where
\bel{Xal-cl}
\begin{split}
\sum_{|\ba|=n}&X_{\ba}(t)\xi_{\ba}=X_0\sigma^n
\int_0^t\int_0^{s_n}\ldots\int_0^{s_2}\\
&\Phi(t-s_n)\Phi(s_n-s_{n-1})\ldots\Phi(s_2-s_1)
E_{\beta}(as_1^{\beta})\,dw(s_1)\ldots dw(s_n),
\end{split}
\ee
and
\bel{Xal-sq}
\begin{split}
\bE X^2(t) &= X_0^2\Big(
E_{\beta}^2(at^{\beta})+\sum_{n=1}^{\infty}\sigma^{2n}
\int_0^t\int_0^{s_n}\ldots\int_0^{s_2}\\
&\Phi^2(t-s_n)\Phi^2(s_n-s_{n-1})\ldots\Phi^2(s_2-s_1)
E_{\beta}^2(as_1^{\beta})\,ds_1\ldots ds_n\Big).
\end{split}
\ee
\end{theorem}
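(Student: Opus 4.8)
The plan is to work with the equivalent integral form \eqref{FGBM-I-OU},
\begin{equation*}
X(t)=X_0E_{\beta}(at^{\beta})+\sigma\int_0^t\Phi(t-s)X(s)\,dw(s),
\end{equation*}
a linear stochastic Volterra equation whose Neumann series generates exactly the iterated It\^o integrals in \eqref{Xal-cl}. Writing $h(t)=X_0E_{\beta}(at^{\beta})$ and, for $\cF_t$-adapted square-integrable $Y$, $\Lambda[Y](t)=\sigma\int_0^t\Phi(t-s)Y(s)\,dw(s)$, I would set up the Picard iterates $X^{(0)}=h$ and $X^{(n+1)}=h+\Lambda[X^{(n)}]$, so that $X^{(n)}=\sum_{j=0}^n\Lambda^j[h]$. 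Here $\Lambda^j[h]$ is precisely the $j$-fold iterated stochastic integral on the right-hand side of \eqref{Xal-cl}. Since $\Lambda$ maps the order-$j$ Wiener chaos into the order-$(j+1)$ chaos, $\Lambda^j[h]$ is the order-$j$ chaos component, which identifies $\sum_{|\ba|=j}X_{\ba}\xi_{\ba}$ with \eqref{Xal-cl} and exhibits $X=\sum_{j\ge0}\Lambda^j[h]$ as the candidate solution.

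The second step is the $L^2$ accounting. Because each $\Lambda^{j}[h]$ is $\cF_t$-adapted, the It\^o isometry applied $n$ times yields, by induction on $n$,
\begin{equation*}
\bE\big(\Lambda^n[h](t)\big)^2=\sigma^{2n}\int_0^t\!\int_0^{s_n}\!\cdots\!\int_0^{s_2}\Phi^2(t-s_n)\cdots\Phi^2(s_2-s_1)h^2(s_1)\,ds_1\cdots ds_n,
\end{equation*}
which is the order-$n$ summand in \eqref{Xal-sq} (the term $n=0$ being the squared mean $X_0^2E_{\beta}^2(at^{\beta})$); orthogonality of distinct chaos levels then gives the full identity \eqref{Xal-sq} for $\bE X^2(t)$. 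Convergence of the series is exactly where \eqref{restr2} enters: on $[0,T]$ the factor $E_{\beta,\beta-\gamma+1}(au^{\beta})$ is bounded by \eqref{ML-asympt0}, so $\Phi^2(u)\le C^2u^{2(\beta-\gamma)}$, and $u^{2(\beta-\gamma)}$ is integrable at the origin precisely when $2(\beta-\gamma)>-1$, i.e. under \eqref{restr2}. Setting $\delta=2(\beta-\gamma)+1>0$ and bounding $E_{\beta}^2(as^{\beta})\le M$ on $[0,T]$, the order-$n$ integral is at most $C^{2n}X_0^2M$ times the nested simplex integral of $\prod_j(s_{j+1}-s_j)^{\delta-1}$, which the semigroup identity \eqref{IJ-sg} evaluates to $\Gamma(\delta)^n t^{n\delta}/\Gamma(n\delta+1)$. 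Hence
\begin{equation*}
\bE X^2(t)\le X_0^2M\sum_{n\ge0}\frac{\big(C^2\sigma^2\Gamma(\delta)t^{\delta}\big)^n}{\Gamma(n\delta+1)}=X_0^2M\,E_{\delta}\big(C^2\sigma^2\Gamma(\delta)t^{\delta}\big)<\infty
\end{equation*}
uniformly on $[0,T]$, so the Neumann series converges in $L^2$ and defines an $\cF_t$-adapted, square-integrable process.

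It remains to confirm that this process is a genuine classical solution and that it is unique. Passing to the $L^2$ limit in the Picard recursion shows that $X$ satisfies \eqref{FGBM-I-OU}, hence the defining equation \eqref{FGBM-I}, and mean-square bounds on the increments $\bE\big(X(t)-X(t')\big)^2$ (via the same kernel estimate) together with the Kolmogorov criterion give continuity. For uniqueness, if $X$ and $Y$ are two classical solutions then $Z=X-Y$ solves the homogeneous equation $Z(t)=\sigma\int_0^t\Phi(t-s)Z(s)\,dw(s)$, so by the It\^o isometry and $\Phi^2(u)\le C^2u^{\delta-1}$ the function $y(t)=\bE Z^2(t)$ satisfies $y(t)\le C^2\sigma^2\int_0^t(t-s)^{\delta-1}y(s)\,ds$; the fractional Gronwall--Bellman inequality (Proposition \ref{prop-FGBI}) with $A\equiv 0$ then forces $y\equiv0$, i.e. $X=Y$. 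The main obstacle is the second step: pinning down the exact isometry identity \eqref{Xal-sq} for the iterated integrals and extracting from it both the sharp integrability threshold $\beta-\gamma>-1/2$ and the Mittag-Leffler majorant that guarantees summability; everything else is comparatively routine.
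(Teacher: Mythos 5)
Your argument is correct, and it reaches \eqref{Xal-cl}--\eqref{Xal-sq} by a genuinely different route than the paper. You derive the chaos expansion directly from the Neumann (Picard) series for the stochastic Volterra equation \eqref{FGBM-I-OU}: each iterate $\Lambda^{n}[h]$ \emph{is} the $n$-fold iterated It\^{o} integral in \eqref{Xal-cl}, so the identification of chaos levels and the isometry identity \eqref{Xal-sq} come for free, and uniqueness follows cleanly from Proposition \ref{prop-FGBI} applied to $\bE Z^2$. The paper instead writes the propagator system \eqref{prop00} of deterministic fractional ODEs for the individual coefficients $X_{\ba}$, solves it via Theorem \ref{th:DetEq-Gen} to get \eqref{prop00-sol} and \eqref{Xba-sol}, and only then converts to iterated It\^{o} integrals through the Hermite-polynomial/multiple-Wiener-integral correspondence. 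Your route is more elementary and self-contained (no appeal to the general chaos machinery of the propagator); what the paper's route buys is the explicit formula \eqref{Xba-sol} for each $X_{\ba}(t)$ separately, defined for \emph{all} $\beta,\gamma\in(0,1]$ regardless of \eqref{restr2} --- this is exactly what is needed for the subsequent theorem on the $\mfq$-generalized chaos solution, where the Neumann series of stochastic integrals no longer converges. The convergence estimate is identical in both proofs: $\Phi^2(u)\le Cu^{\delta-1}$ with $\delta=2(\beta-\gamma)+1>0$, the simplex integral evaluating to $\Gamma(\delta)^n t^{n\delta}/\Gamma(n\delta+1)$ via \eqref{IJ-sg}. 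One minor imprecision: the Kolmogorov criterion needs $\bE|X(t)-X(t')|^p\le C|t-t'|^{1+\epsilon}$, and your mean-square increment bound of order $|t-t'|^{\delta}$ with possibly $\delta<1$ does not give this directly; you would need to upgrade to higher moments (e.g.\ by hypercontractivity within each chaos level) --- though the paper is equally silent on continuity, dismissing it under ``the standard fixed point argument.''
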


\begin{proof}
Existence and uniqueness follow from \eqref{FGBM-I} by the
standard fixed point argument. To derive \eqref{Xal-cl},
we use the general result about chaos expansion for linear
evolution equations \cite[Section 6]{LR_shir}.
In particular, the functions $X_{\ba}=X_{\ba}(t)$, $\ba\in \cJ$,
satisfy a system of equations, known as the {\tt propagator}:
\bel{prop00}
\begin{split}
|\ba|=0: \
\partial^{\beta}_t X_{\zm}&=aX_{\zm},\ X_{\zm}(0)=X_0;\\
|\ba|>0:\
\partial^{\beta}_tX_{\ba}(t)&=aX_{\ba}(t)
+\sigma\sum_{k\geq 1} \sqrt{\alpha_k}\,
I^{1-\gamma}(X_{\ba-\km}\,\mfm_k)(t), \ X_{\ba}(0)=0,
\end{split}
\ee
where $\km$ is the multi-index with $|\km|=1$ and the only
non-zero element in position $k$. By Theorem \ref{th:DetEq-Gen},
\bel{prop00-sol}
\begin{split}
X_{\zm}(t)&=X_0E_{\beta}(at^{\beta}),\\
X_{\ba}(t)&=
\sigma\sum_{k\geq 1} \sqrt{\alpha_k}
\int_0^t(t-s)^{\beta-1}E_{\beta,\beta}\big(a(t-s)^{\beta}\big)
I^{1-\gamma}(X_{\ba-\km}\,\mfm_k)(s)\,ds,\\
 & |\ba|>0.
\end{split}
\ee
Changing the order of integration and using \eqref{MTL-con-gen},
\begin{align*}
&\int_0^t(t-s)^{\beta-1}E_{\beta,\beta}\big(a(t-s)^{\beta}\big)
I^{1-\gamma}(X_{\ba-\km}\,\mfm_k)(s)\, ds\\
=
&\int_0^t(t-s)^{\beta-\gamma}E_{\beta,1+\beta-\gamma}
\big(a(t-s)^{\beta}\big)
X_{\ba-\km}(s)\mfm_k(s)\, ds,
\end{align*}
and then, iterating the result,
\bel{Xba-sol}
\begin{split}
X_{\ba}(t)&=\frac{\sigma^n}{\sqrt{\ba!}}
\sum_{\pi\in \mathcal{P}^n}
\int_0^t\int_0^{s_n}\cdots\int_0^{s_2}
\Phi(t-s_n)\\
&\times\Phi(s_n-s_{n-1})\ldots\Phi(s_2-s_1) X_{\zm}(s_1)
\mfm_{i_{\pi(n)}}(s_n)\ldots\mfm_{i_{\pi(1)}}\,
ds_1\ldots ds_n,
\end{split}
\ee
where $\mathcal{P}^n$ is the permutations group of
$\{1,\ldots,n\}$
and $\{i_1,\ldots,i_n\}$ is the
characteristic set of $\ba$; cf. \cite[Corollary 6.6]{LR_shir}.
After that, \eqref{Xal-cl} follows from the connection between
the Hermite polynomials and the iterated It\^{o} integrals
\cite[Theorem 3.1]{Ito}. Then \eqref{Xal-sq} follows from
\eqref{Xal-cl} by It\^{o} isometry.

It remains to show that the right-hand side of \eqref{Xal-sq} is
finite. To this end, we use \eqref{ML-asympt0} to write
$$
\Phi^2(t)\leq Ct^{r-1},\ r=2(\beta-\gamma)+1>0,
$$
so that
\begin{equation*}
\begin{split}
\int_0^t\int_0^{s_n}&\ldots\int_0^{s_2}
\Phi^2(t-s_n)\Phi^2(s_n-s_{n-1})\ldots\Phi^2(s_2-s_1)\\
&\times E_{\beta}^2(as_1^{\beta})\,ds_1\ldots ds_n\Big)
\leq \frac{C^n(T)}{\Gamma(nr+1)},
\end{split}
\end{equation*}
and convergence follows by the Stirling formula.
\end{proof}

Note that condition \eqref{restr2} is necessary and sufficient for the
convergence of the integrals in \eqref{Xal-sq}, and once again we
see that, without \eqref{restr2}, no classical solution of \eqref{FGBM}
can exist.

On the other hand, each $X_{\ba}$ is well-defined
by \eqref{prop00-sol} for all $\beta,\gamma\in (0,1]$.
Accordingly, we call the resulting formal sum  $\sum_{\ba\in \cJ}
X_{\ba}(t)\xi_{\ba}$ the {\tt chaos solution} of
\eqref{FGBM}. By construction, this solution exists and is unique.

\begin{theorem}
If $\beta-\gamma\leq -1/2$ and $\{\mfm_k,\ k\geq 1\}$
 are given by \eqref{FCB}, then the chaos solution of
\eqref{FGBM} is a $\mfq$-generalized process for
every $\mfq$ satisfying \eqref{q-bg}.
\end{theorem}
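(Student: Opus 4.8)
The chaos coefficients $X_{\ba}$ are already well-defined for all $\beta,\gamma\in(0,1)$ by the propagator solution \eqref{prop00-sol}, so the only thing left to verify is the summability condition of Definition \ref{def:qGP}, namely $\sum_{\ba\in\cJ}\mfq^{\ba}\,\|X_{\ba}\|_{L_2((0,T))}^2<\infty$. I would group the sum by $n=|\ba|$ and pass to the iterated-integral form \eqref{Xba-sol} of the coefficients. Introducing the correlation kernel $Q(s,s')=\sum_{k\ge1}q_k\mfm_k(s)\mfm_k(s')$ and applying It\^{o} isometry exactly as in the derivation of \eqref{Xal-sq} (the diagonal case $q_k\equiv1$, where $Q(s,s')=\sum_k\mfm_k(s)\mfm_k(s')$ collapses to a Dirac mass by completeness, must reproduce \eqref{Xal-sq}), the weighted norm should collapse into a clean sum over ordered frequency strings $\vec k=(k_1,\dots,k_n)$:
\[
\sum_{|\ba|=n}\mfq^{\ba}\|X_{\ba}\|_{L_2((0,T))}^2
=X_0^2\sigma^{2n}\sum_{\vec k\in\bN^n}\Big(\prod_{j=1}^n q_{k_j}\Big)\,\|G_{\vec k}\|_{L_2((0,T))}^2,
\]
where $G_{\vec k}(t)=\int_{0<s_1<\cdots<s_n<t}\Phi(t-s_n)\cdots\Phi(s_2-s_1)E_{\beta}(as_1^{\beta})\prod_{j}\mfm_{k_j}(s_j)\,ds_1\cdots ds_n$ is the nested Volterra integral built from the kernel \eqref{FSol}. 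Everything thus reduces to an estimate of $\|G_{\vec k}\|_{L_2((0,T))}$ that must display both a per-frequency decay, to absorb the weights $q_{k_j}$, and a factorial decay in $n$, to guarantee convergence of the chaos series.

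Set $r=1+\beta-\gamma$; since $\gamma-\beta\ge 1/2$ we have $r\in(0,1/2]$, and condition \eqref{q-bg} reads $\sum_k k^{-2r}q_k<\infty$. The basic mechanism is the oscillation estimate \eqref{cos-asympt}: because $\Phi(u)\sim u^{\beta-\gamma}=u^{-(\gamma-\beta)}$ near $u=0$ with a smooth Mittag-Leffler correction controlled by \eqref{ML-asympt0}, a single application of the operator $S_k h(t)=\int_0^t\Phi(t-s)\mfm_k(s)h(s)\,ds$ produces a gain of order $k^{-r}$, the rate being dictated by the integrable endpoint singularity of $\Phi$ at $s=t$ (cf.\ \cite[Example 6.6.1]{BO-Asympt}). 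The crucial observation is that this decay order $r$ coincides with the smoothing order of the convolution with $\Phi$, so that the same operator is responsible both for the frequency gain and for the fractional-integral structure that produced the factorial in Theorem \ref{th:FGBM}. Concretely, the plan is to prove a single Volterra-type bound
\[
\|S_k h\|_{L_2((0,t))}^2\le C\,k^{-2r}\int_0^t (t-\tau)^{2r-1}\,\|h\|_{L_2((0,\tau))}^2\,d\tau,\qquad t\in(0,T],
\]
with $C=C(T)$ independent of $k$.

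Granting this bound, I would write $G_{\vec k}=S_{k_n}\circ\cdots\circ S_{k_1}\big[E_{\beta}(a\,\cdot^{\beta})\big]$ and iterate $n$ times: each factor contributes $k_j^{-2r}$, while the $n$-fold composition of the weight $(t-\tau)^{2r-1}$ is, up to constants, the fractional integral $I^{2rn}$, so that by the semigroup property \eqref{IJ-sg} and $I^{2rn}[1](t)=t^{2rn}/\Gamma(2rn+1)$ one obtains $\|G_{\vec k}\|_{L_2((0,T))}^2\le C(T)^n\Gamma(2rn+1)^{-1}\prod_{j=1}^n k_j^{-2r}$. Substituting into the reduction above gives
\[
\sum_{|\ba|=n}\mfq^{\ba}\|X_{\ba}\|_{L_2((0,T))}^2
\le X_0^2\,\frac{\big(\sigma^2 C(T)\big)^n}{\Gamma(2rn+1)}\Big(\sum_{k\ge1}k^{-2r}q_k\Big)^n,
\]
and, the last sum being finite by \eqref{q-bg}, summation over $n$ converges by the Stirling formula, exactly as at the end of the proof of Theorem \ref{th:FGBM}. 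This yields the required $\sum_{\ba}\mfq^{\ba}\|X_{\ba}\|_{L_2((0,T))}^2<\infty$.

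The main obstacle is the per-level estimate of the second paragraph: extracting the sharp decay $k^{-2r}$ and the fractional weight $(t-\tau)^{2r-1}$ simultaneously and uniformly in $k$. The difficulty is that the two effects, though of the same order $r$, resist being read off separately. The sup-norm version $|S_k h(t)|\le Ck^{-r}\|h\|_{\infty}$ that follows directly from \eqref{cos-asympt} carries the decay but, composed $n$ times, yields no factorial; conversely, bounding $G_{\vec k}$ through the $L_1$-kernel $|\Phi|$ produces the factorial but discards the oscillation entirely. Moreover one cannot afford to lose even an $\varepsilon$ in the exponent, since \eqref{q-bg} is used at the sharp value $-2r$, and the strong singularity $\Phi(u)\sim u^{-\kappa}$ with $\kappa=\gamma-\beta\ge 1/2$ obstructs naive iteration of oscillation estimates based on $\mathcal{C}^1$ or bounded-variation norms, because the outputs $S_k h$ possess only fractional regularity. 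The resolution I would pursue is to prove the combined bound directly, splitting the $s$-integral at distance $1/k$ from the endpoint $s=t$: on the short interval the integrable singularity contributes $k^{-r}$ against the $L_2$ size of $h$, while on the complementary interval one integration against the oscillation of $\mfm_k$, with the fractional weight retained on the $h$-side, supplies the matching $(t-\tau)^{2r-1}$ factor without costing regularity of $h$.
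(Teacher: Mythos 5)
Your reduction to the weighted sum over frequency strings, and your identification of the central difficulty --- that the per-frequency gain from \eqref{cos-asympt} and the fractional-integral weight responsible for the factorial must be extracted from the \emph{same} convolution --- are both sound, and indeed more explicit than anything in the paper. But the argument is not complete, and the one lemma on which everything hinges is false as stated. With your $r=1+\beta-\gamma\in(0,1/2]$, take $h\equiv 1$ and $t\le c/k$ for a small constant $c$: on that interval $\mfm_k$ does not oscillate, $\Phi(u)\asymp u^{\,r-1}$ near $u=0$, so $S_kh(\tau)=\int_0^\tau\Phi(\tau-s)\mfm_k(s)\,ds\asymp\tau^{\,r}$ and $\|S_kh\|_{L_2((0,t))}^2\asymp t^{2r+1}$, while your right-hand side is $Ck^{-2r}\int_0^t(t-\tau)^{2r-1}\tau\,d\tau\asymp k^{-2r}t^{2r+1}$. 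The claimed inequality fails by the factor $k^{2r}$. The oscillation gain is only available once $t-\tau\gtrsim 1/k$; any correct per-level bound must therefore carry a kernel of the type $\min\bigl(k^{-r},(t-\tau)^{r}\bigr)^2(t-\tau)^{-1}$, whose total mass on $(0,T)$ is of order $k^{-2r}\bigl(1+\ln (kT)\bigr)$. Because \eqref{q-bg} is invoked at the sharp exponent $-2r$, this logarithm cannot simply be pushed onto the weights, and whether the $n$-fold composition still produces both $\prod_jk_j^{-2r}$ and $1/\Gamma(2rn+1)$ is exactly the point your final paragraph defers rather than settles. So the ``main obstacle'' you name is real, the splitting at distance $1/k$ does not visibly close it, and \eqref{Xal-gen-sq} is not established.

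For comparison, the paper takes a much shorter route: it asserts that, by inspection of the proof of Theorem \ref{th:FGBM}, the weighted bound \eqref{Xal-gen-sq} reduces to the single outermost condition \eqref{Phi-gen-sq}, namely $\sum_kq_k\bigl(\int_0^T\Phi(T-s)\mfm_k(s)\,ds\bigr)^2<\infty$, which it then reads off from \eqref{cos-asympt} via the symmetric operator $Q\mfm_k=\sqrt{q_k}\,\mfm_k$. That is, the paper spends the frequency decay only on the last convolution $\Phi(T-s)\mfm_k(s)$ and leaves the inner levels to the bookkeeping of the classical case, whereas you attempt to harvest the decay at every level simultaneously together with the factorial. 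The latter is a legitimate and more ambitious strategy, but it is precisely where the proposal stalls; if you pursue it, the corrected per-level kernel above, not the bound you wrote, is the object you would need to iterate.
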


\begin{proof}
The objective is to show that \eqref{q-bg} implies
\bel{Xal-gen-sq}
\sum_{\ba\in \cJ}\mfq^{\ba}|X_{\ba}(t)|^2<\infty, \ t>0,
\ee
and analysis of the proof of Theorem \ref{th:FGBM} shows that
\eqref{Xal-gen-sq} will follow from
\bel{Phi-gen-sq}
\sum_{k\geq 1} q_k\left(\int_0^T \Phi(T-s)\,\mfm_k(s)\, ds\right)^2
<\infty.
\ee
To prove \eqref{Phi-gen-sq}, define the operator
$Q$ on $L_2((0,T))$ by
$$
Q\mfm_k=\sqrt{q_k}\,\mfm_k,\ k\geq 1.
$$
Then the operator  $Q$ is symmetric on $L_2((0,T))$,
\begin{align*}
Qf(t)&=\sum_{k\geq 1} \sqrt{q_k}
\left(\int_0^Tf(s)\mfm_k(s)\, ds\right)
\mfm_k(t),\\
\sum_{k\geq 1} & q_k\left(\int_0^T \Phi(T-s)\,\mfm_k(s)\, ds\right)^2
=\int_0^T\big(Q\Phi(t)\big)^2\,dt,
\end{align*}
and \eqref{Phi-gen-sq} follows from \eqref{cos-asympt}.
\end{proof}

\section{Stochastic Fractional Parabolicity Conditions}
\label{sec:SFPC}

Consider the stochastic equation
\bel{StochParab}
du(t,x)=bu_{xx}(t,x)dt +
\big(\varrho u_{xx}(t,x)+
\sigma u_x(t,x)+cu(t,x)\big)\, dw(t),\ t>0,\ x\in \bR,
\ee
with real numbers $b,\varrho, \sigma, c$ as parameters.
It is well known that
\begin{itemize}
\item Equation \eqref{StochParab} is well-posed in $L_2(\bR)$ if and
only if $\varrho=0$ and $2b-\sigma^2\geq 0$; see, for example,
\cite[Section 2.3.1]{LR-SPDE}.
\item Equation \eqref{StochParab} is well-posed in  a suitable chaos
space if $b>0$; cf. \cite{LR-fs}.
\end{itemize}
On other hand, a perturbation-type argument  \cite{Kim-Fract}
shows that the following fractional version of
\eqref{StochParab},
\bel{F-SPDE0}
\partial_t^{\beta}u(t,x)=
au_{xx}(t,x)+\partial_t^{\gamma}\int_0^t
\big(\varrho u_{xx}(s,x)+\sigma u_x(s,x) + c u(s,x)\big)\, dw(s)
\ee
is well-posed in $L_2(\bR)$ if $\beta\in (0,1)$,
$|\varrho|$ is sufficiently close to zero, and $0<\gamma<1/2$.  Note that if
$\gamma<1/2$, then \eqref{restr2} holds for all $\beta\in (0,1)$.

The objective of this section is to  establish more general sufficient conditions for
 well-posedness of \eqref{F-SPDE0} and similar equations.

Fix the numbers  $b>0,\ \sigma\in \bR,\
\beta,\gamma\in (0,1]$, $\alpha,\nu\in (0,2]$, and let
$$
\Lambda=(-\boldsymbol{\Delta})^{1/2}
$$
be the fractional Laplacian defined in the Fourier domain by
$$
\frac{1}{(2\pi)^{\mathrm{d}/2}}
\int_{\bRd}e^{-i x y} (\Lambda f)(x)\, dx =
 \frac{|y|}{(2\pi)^{\mathrm{d}/2}}\int_{\bRd}e^{-i x y}  f(x)\, dx.
$$

Consider the  equation
\bel{F-SPDE1}
\partial_t^{\beta}u(t,x)+
b\Lambda^{\alpha}(t,x)=\sigma \,\partial_t^{\gamma}\!\!\int_0^t
 \Lambda^{\nu}u(s,x)\, dw(s),\ t>0,\ x\in \bRd,
\ee
with non-random initial condition $u(0,\cdot)\in L_2(\bRd)$.

\begin{definition}
An $\mathcal{F}_t$-adapted process
$u\in L_2\Big(\Omega; \cC\big([0,T];
L_2(\bRd)\big)\Big)$ is called a solution of \eqref{F-SPDE1} if,
for every  $\varphi\in \cC^{\infty}_0(\bRd),$
\begin{align*}
\mathbb{P}\Bigg(&
J^{1-\beta}\big(u,\varphi\big)_{L_2(\bRd)}(t)+
b\int_0^t\big(u,\Lambda^{\alpha}\varphi\big)_{L_2(\bRd)}(s)\,ds\\
&=\sigma J^{1-\gamma}\left(\int_0^{\cdot}\big(u,
\Lambda^{\nu}\varphi\big)_{L_2(\bRd)}(s)\,dw(s)\right)(t),\
 t\in [0,T]\Bigg)=1.
\end{align*}
Equation  \eqref{F-SPDE1} is called well-posed in $L_2(\bRd)$ if, for
every initial condition $u(0,\cdot)\in \cC^{\infty}_0(\bRd)$, there
exists a unique solution $u$ and
$$
\bE\|u\|_{L_2(\bRd)}^2(t)\leq C \|u(0,\cdot)\|_{L_2(\bRd)}^2,
$$
with $C$ independent of the initial condition.
\end{definition}

The $L_2$-isometry of the Fourier transform implies that,
  in terms of  well-posedness in $L_2$, equation
\eqref{F-SPDE1} with $\alpha=2$ and $\nu=1,2$, is equivalent to \eqref{F-SPDE0}; this equivalence
might no longer hold  for well-posedness in $L_p$, $p>2$,
 see \cite{BV-Lp} when $\beta=\gamma=1,\ \alpha=2, \ \nu=1$.

 In the rest of the section we show that, under \eqref{restr2},
 \begin{enumerate}
 \item For  $\gamma\in (0,1/2)$,
 equation \eqref{F-SPDE1} is well-posed in $L_2(\bRd)$
 if and only if $\alpha\geq \nu$;
 \item For  $\gamma=1/2$,
 equation \eqref{F-SPDE1} is well-posed in $L_2(\bRd)$
 if and only if $\alpha> \nu$;
\item For $\gamma=(1/2)+\beta\varepsilon,$
$\varepsilon\in (0,1)$, equation \eqref{F-SPDE1} is well-posed in $L_2(\bRd)$ if  $\alpha> \nu/(1-\varepsilon)$, and
can be well-posed when $\alpha= \nu/(1-\varepsilon)$
under additional conditions on $b$ and $\sigma$.
\end{enumerate}

\begin{theorem}
\label{th-SFPC}
Assume  that  \eqref{restr2} holds.
 Then \eqref{F-SPDE1} is
well-posed in $L_2(\bRd)$ in each of the following cases:
\begin{itemize}
\item $\gamma\in (0,1/2)$ and $\alpha\geq \nu$;
\item $\gamma=1/2$ and $\alpha>\nu$;
\item $\gamma\in (1/2,1]$ and
\bel{restr1s}
\alpha>\frac{\nu}{1-\frac{\gamma-(1/2)}{\beta}}.
\ee
\end{itemize}
\end{theorem}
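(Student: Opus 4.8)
The plan is to diagonalize \eqref{F-SPDE1} by the spatial Fourier transform and reduce it, frequency by frequency, to the time fractional geometric Brownian motion of Section \ref{sec:TFGBM}. Writing $\hat u(t,y)$ for the Fourier transform of $u(t,\cdot)$, equation \eqref{F-SPDE1} becomes, for each fixed $y\in\bRd$,
\[
\partial_t^{\beta}\hat u(t,y)=-b|y|^{\alpha}\hat u(t,y)+\sigma|y|^{\nu}\,\partial_t^{\gamma}\!\int_0^t\hat u(s,y)\,dw(s),
\]
which is \eqref{FGBM} with $a=-b|y|^{\alpha}$ and with noise coefficient $\sigma|y|^{\nu}$ in place of $\sigma$. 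Since these coefficients are real and the equation is linear, $\hat u(t,y)=\hat u(0,y)\,\Psi(t,y)$ for a real fundamental solution $\Psi$, so by Parseval and Theorem \ref{th:FGBM} (formula \eqref{Xal-sq} with $X_0=1$),
\[
\bE\|u\|_{L_2(\bRd)}^2(t)=\int_{\bRd}|\hat u(0,y)|^2\,G(t,y)\,dy,\qquad G(t,y):=\bE\Psi^2(t,y).
\]
Hence well-posedness reduces to the single bound $\sup_{t\le T}\sup_{y}G(t,y)<\infty$; existence, uniqueness and the required continuity in $t$ then follow frequency-wise from Theorem \ref{th:FGBM} together with this estimate.

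Next I would set $\lambda=|y|$, $h(\tau)=E_{\beta}^2(-b\lambda^{\alpha}\tau^{\beta})$ and $K(\tau,y)=\sigma^2\lambda^{2\nu}\Phi^2(\tau)$, where $\Phi$ is \eqref{FSol} with $a=-b\lambda^{\alpha}$. Reading \eqref{Xal-sq} as a Neumann series in the kernel $K$ shows that $G$ solves the renewal equation $G=h+K\ast G$, whence $G=\sum_{n\ge0}K^{\ast n}\ast h$. Because $E_{\beta}(-x)\in(0,1]$ we have $h\le1$, and two consequences of \eqref{ML-asympt1} (in the form $E_{\beta,\rho}^2(-x)\le C(1+x)^{-2}$) do the work. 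First, by Tonelli $\int_0^T K^{\ast n}\le(\int_0^T K)^n$, so whenever $\int_0^T K(\tau,y)\,d\tau<1$ one gets the uniform bound $G(t,y)\le(1-\int_0^T K)^{-1}$. Second, writing $x=b\lambda^{\alpha}\tau^{\beta}$ gives, for $\lambda\ge1$ and $\alpha\ge\nu$,
\[
K(\tau,y)\le C\sigma^2\lambda^{2\nu}\tau^{2(\beta-\gamma)}(1+x)^{-2}=\frac{C\sigma^2}{b^2}\,\lambda^{2\nu-2\alpha}\,\tau^{-2\gamma}\,\frac{x^2}{(1+x)^2}\le \frac{C\sigma^2}{b^2}\,\tau^{-2\gamma}.
\]
For bounded frequencies the crude bound $\Phi^2(\tau)\le C\tau^{2(\beta-\gamma)}$ reproduces the Stirling argument of Theorem \ref{th:FGBM} and yields $G(t,y)\le E_{r}\big(C\sigma^2\Gamma(r)T^{r}\lambda^{2\nu}\big)$ with $r=2(\beta-\gamma)+1$, which is finite and bounded on every set $\{\lambda\le R\}$.

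The heart of the matter is the behaviour of $\int_0^T K(\tau,y)\,d\tau$ as $\lambda\to\infty$. Substituting $x=b\lambda^{\alpha}\tau^{\beta}$ turns it into $C\sigma^2\lambda^{p}\int_0^{X_T}x^{r/\beta-1}E_{\beta,\beta-\gamma+1}^2(-x)\,dx$ with $X_T=b\lambda^{\alpha}T^{\beta}\to\infty$ and $p=2\nu-\alpha(2(\beta-\gamma)+1)/\beta$; condition \eqref{restr2} is exactly what makes this integral converge at $x=0$. When $\gamma>1/2$ the integral also converges at $x=\infty$, so $\int_0^T K\le C\lambda^{p}$, and a direct computation shows that \eqref{restr1s} is precisely $p<0$; thus $\int_0^T K\to0$ and the geometric bound applies for large $\lambda$. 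When $\gamma=1/2$ the same integral diverges only logarithmically, giving $\int_0^T K\le C\lambda^{p}(1+\log\lambda)$ with $p=2\nu-2\alpha<0$ under $\alpha>\nu$, so again $\int_0^T K\to0$. When $\gamma<1/2$ I would instead use the pointwise bound above: $G=h+K\ast G\le 1+\tfrac{C\sigma^2}{b^2}\int_0^t(t-s)^{-2\gamma}G(s,y)\,ds$, and since $1-2\gamma\in(0,1)$ the fractional Gronwall--Bellman inequality (Proposition \ref{prop-FGBI}) gives $G(t,y)\le E_{1-2\gamma}\big(\tfrac{C\sigma^2}{b^2}\Gamma(1-2\gamma)T^{1-2\gamma}\big)$ uniformly in $\lambda\ge1$. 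In each case, combining the large-$\lambda$ bound with the Stirling bound on $\{\lambda\le R\}$ (choosing $R$ so large that $\int_0^T K<1/2$ beyond it, in the cases $\gamma\ge1/2$) yields $\sup_{t\le T}\sup_y G(t,y)<\infty$. I expect the main obstacle to be the borderline regimes: the case $\gamma<1/2,\ \alpha=\nu$, where $\int_0^T K$ tends to a positive constant rather than to zero so the geometric series fails, is exactly why the Gronwall route with the integrable kernel $\tau^{-2\gamma}$ is needed, and the logarithmic divergence at $\gamma=1/2$ is what forces the strict inequality $\alpha>\nu$ there.
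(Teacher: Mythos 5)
Your argument is correct and arrives at exactly the conditions of Theorem \ref{th-SFPC}, but by a partially different route. For bounded frequencies and for the case $\gamma\in(0,1/2)$, $\alpha\geq\nu$, you and the paper do the same thing: pass to the Fourier domain, write the second moment as the renewal (Volterra) identity \eqref{F-SPDE-FD-sq}, bound the kernel pointwise by $Cb^{-2}\sigma^{2}(t-s)^{-2\gamma}$ using \eqref{ML-asympt1}, and close with the fractional Gronwall inequality (Proposition \ref{prop-FGBI}). The genuine divergence is in the cases $\gamma\geq 1/2$. The paper stays with the pointwise-bound-plus-Gronwall scheme and interpolates the Mittag--Leffler decay, replacing \eqref{MainEst} by $E^{2}_{\beta,\beta-\gamma+1}(-x)\leq Cx^{-2(1-\varepsilon)}$, so that the kernel $(t-s)^{2(\varepsilon\beta-\gamma)}$ becomes integrable while the frequency factor $|y|^{2\nu-2\alpha(1-\varepsilon)}$ stays bounded; the existence of an admissible $\varepsilon$ is precisely \eqref{restr1s} (respectively $\alpha>\nu$ when $\gamma=1/2$). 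You instead compute the $L_{1}((0,T))$-norm of the kernel exactly via the substitution $x=b|y|^{\alpha}\tau^{\beta}$ and sum the Neumann series geometrically once that norm falls below $1$; your exponent condition $p<0$ is algebraically identical to \eqref{restr1s}, and the logarithmic divergence of the limiting integral at $\gamma=1/2$ correctly forces the strict inequality $\alpha>\nu$ there. The trade-off: the paper's interpolation is shorter and treats all three regimes with one estimate, while your $L_{1}$ computation is sharper and is essentially the computation the paper postpones to Theorem \ref{th-SFPC-eq}, where the limit integral $\sigma^{2}_{\infty}(1,\beta,\gamma)$ of \eqref{FOU-LimVar} controls the borderline $\alpha=\nu/(1-\varepsilon)$; with the extra step used there (splitting off the inhomogeneous term and the near-diagonal part of the convolution) your method would recover that refinement as well. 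The only points requiring care in your write-up --- the bound $\sup_{t}h\leq 1$, the identification of \eqref{Xal-sq} with the Neumann series of the renewal equation, and the Tonelli bound $\int_{0}^{T}K^{\ast n}\leq\bigl(\int_{0}^{T}K\bigr)^{n}$ --- are all correctly justified.
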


\begin{proof}
Denote by $U=U(t,y)$  the Fourier transform of $u$ in the space variable.
Then, by Fourier isometry, \eqref{F-SPDE1} is well-posed in
$L_2(\bRd)$ if and only if
\bel{F-WP}
\bE|U(t,y)|^2\leq C|U(0,y)|^2,\ t>0,
\ee
for some $C$ independent of $y$.
Accordingly, throughout the proof,   $C$ denotes a positive number independent of $y$.

Equation  \eqref{F-SPDE1} in Fourier domain is
\bel{F-SPDE-FD}
\partial^{\beta}_t U(t,y)=
-b|y|^{\alpha}\,U(t,y)+\sigma|y|^{\nu}\,
\partial^{\gamma}_t\!\!\int_0^t U(s,y)\, dw(s).
\ee
Notice that, for each $y\in \bRd$, equation
 \eqref{F-SPDE-FD} is of the same type as
\eqref{FGBM}. Then \eqref{FGBM-I-OU} implies
\bel{F-SPDE-FD-sq}
\begin{split}
\bE |U(t,y)|^2&=
|U(0,y)|^2E^2_{\beta}\big(-b|y|^{\alpha}t^{\beta}\big)\\
&+
\sigma^2|y|^{2\nu}\int_0^t
(t-s)^{2(\beta-\gamma)} E^2_{\beta,\beta-\gamma+1}
\big(-b|y|^{\alpha}(t-s)^{\beta}\big)\,\bE|U(s,y)|^2\,ds.
\end{split}
\ee

If $|y|\leq 1$, then \eqref{ML-asympt1} immediately implies
$$
\bE |U(t,y)|^2\leq C\Big( |U(0,y)|^2
+\int_0^t(t-s)^{2(\beta-\gamma)}\,\bE |U(s,y)|^2\, ds\Big),
$$
and \eqref{F-WP} follows by Proposition \ref{prop-FGBI}.

If  $|y|>1$, and $\gamma\in (0,1/2)$,
 then we use \eqref{ML-asympt1} to write
 \begin{align}
 E^2_{\beta}\big(-b|y|^{\alpha}t^{\beta}\big)& \leq C,
 \notag \\
 \label{MainEst}
E^2_{\beta,\beta-\gamma+1}
\big(-b|y|^{\alpha}(t-s)^{\beta}\big)&\leq
\frac{C}{b^2|y|^{2\alpha}(t-s)^{2\beta}},
\end{align}
and then \eqref{F-SPDE-FD-sq} becomes
\bel{F-SPDE-FD-sd}
\bE |U(t,y)|^2\leq C\Big(|U(0,y)|^2+
\int_0^t(t-s)^{-2\gamma}\,\bE|U(s,y)|^2\,ds\Big),
\ee
so that \eqref{F-WP} again follows by Proposition \ref{prop-FGBI}.

If $\gamma\geq1/2$, then the integral on the right-hand side of
\eqref{F-SPDE-FD-sd} diverges. Accordingly, we replace \eqref{MainEst}
with
$$
E^2_{\beta,\beta-\gamma+1}
\big(-b|y|^{\alpha}(t-s)^{\beta}\big)\leq
\frac{C}{\big(a^2|y|^{2\alpha}(t-s)^{2\beta}\big)^{1-\varepsilon}},
$$
 taking $\varepsilon>0$ if $\gamma=1/2$ and
 $\varepsilon\beta>\gamma-1/2$ if $\gamma\in (1/2,1]$.
 Note that \eqref{restr2} is equivalent to  $1-\varepsilon>0$.
 Then, instead of \eqref{F-SPDE-FD-sd},  we get
 $$
 \bE |U(t,y)|^2\leq C\Big(|U(0,y)|^2+
\int_0^t(t-s)^{2(\varepsilon\beta-\gamma)}\,\bE|U(s,y)|^2\,ds\Big),
$$
as long as $\alpha(1-\varepsilon)\geq \nu$, and conclude the
 proof by applying Proposition \ref{prop-FGBI}.
\end{proof}

Note that
\begin{enumerate}
\item If $\gamma\geq (\beta+1)/2$, then \eqref{restr1s}
becomes $\alpha>2\nu$. For equation \eqref{F-SPDE0},
this means $\varrho=\sigma=0$, which is consistent with
\cite{Kim-Fract}.
\item The results of \cite{LR-fs} suggest that \eqref{F-SPDE1}
is unlikely to have a $\mfq$-generalized chaos
solution when $\nu>\alpha$.
\end{enumerate}

 If $\gamma\in (0,1/2)$, then condition $\alpha\geq \nu$
is also necessary: \eqref{ML-asympt1-1} shows that
\eqref{F-WP} is not possible for large $|y|$ when $\alpha<\nu$.
If $\gamma=\beta=1$, then \eqref{restr1s}
becomes $\alpha>2\nu$. On the other hand, similar to
\eqref{StochParab}, equation
\eqref{F-SPDE1} is well-posed in $L_2(\bRd)$ if $\gamma=\beta=1$,
$\alpha=2, \ \nu=1$, and
\bel{WP-main}
2b\geq \sigma^2.
\ee
This observation suggests that, more generally,   \eqref{F-SPDE1}
could be well-posed if $\gamma- 1/2=\beta\varepsilon$,
$\varepsilon\in (0,1),$ and $\alpha=\nu/(1-\varepsilon)$,
under an additional condition of the type \eqref{WP-main}.
Dimensional analysis implies that the  condition should be of the form
$b\geq C(\beta,\gamma)|\sigma|^{1/(1-\varepsilon)}$.
We conclude this section by  establishing an upper bound for $C(\beta,\gamma)$,
 as well as addressing a similar question when $\gamma=1/2$.

 \begin{theorem}
\label{th-SFPC-eq}
Assume  that  \eqref{restr2} holds.
 Then equation \eqref{F-SPDE1} is
 \begin{itemize}
\item NOT well-posed in $L_2(\bRd)$ if
$\gamma=1/2$ and $\alpha=\nu$;
\item well-posed in $L_2(\bRd)$ if
$\gamma=(1/2)+\varepsilon\beta$, $\varepsilon\in (0,1)$,
\bel{restr1s-eq}
\alpha=\frac{\nu}{1-\varepsilon},\
b\geq
 \big(\sigma_{\infty}^2(1,\beta,\gamma)\big)^{1/(2-2\varepsilon)}
 \,|\sigma|^{1/(1-\varepsilon)},
\ee
\end{itemize}
with $\sigma_{\infty}^2(a,\beta,\gamma)$  defined in
 \eqref{FOU-LimVar}.
\end{theorem}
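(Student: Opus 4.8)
The plan is to reduce, exactly as in the proof of Theorem~\ref{th-SFPC}, to the scalar second-moment Volterra equation \eqref{F-SPDE-FD-sq} for $V(t,y):=\bE|U(t,y)|^2$, and then to read off its criticality from the quantity $\sigma^2_\infty$ that appears in \eqref{restr1s-eq}. Writing the kernel as $\Phi_y(\tau)=\tau^{\beta-\gamma}E_{\beta,\beta-\gamma+1}(-b|y|^\alpha\tau^\beta)$, equation \eqref{F-SPDE-FD-sq} reads
\[
V(t,y)=|U(0,y)|^2 E_\beta^2(-b|y|^\alpha t^\beta)+\sigma^2|y|^{2\nu}\int_0^t \Phi_y^2(t-s)\,V(s,y)\,ds .
\]
The key computation is the scaling of the total kernel mass: the substitution $s=a^{-1/\beta}r$ in \eqref{FOU-LimVar} gives $\sigma^2_\infty(a,\beta,\gamma)=a^{-(2\beta-2\gamma+1)/\beta}\,\sigma^2_\infty(1,\beta,\gamma)$, and with $a=b|y|^\alpha$, $\gamma=\tfrac12+\varepsilon\beta$ (so $2\beta-2\gamma+1=2\beta(1-\varepsilon)$) and $\alpha=\nu/(1-\varepsilon)$ (so $\alpha(1-\varepsilon)=\nu$) this yields
\[
\sigma^2|y|^{2\nu}\int_0^\infty\Phi_y^2(\tau)\,d\tau=\sigma^2 b^{-2(1-\varepsilon)}\sigma^2_\infty(1,\beta,\gamma)=:M,
\]
independent of $y$. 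A direct check shows that \eqref{restr1s-eq} is precisely the statement $M\le 1$.

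For the well-posedness part I would first dispose of the subcritical regime $M<1$: bounding $E_\beta^2\le 1$ and estimating the iterated integrals in the chaos representation \eqref{Xal-sq} (now applied to \eqref{F-SPDE-FD}, with $a\mapsto -b|y|^\alpha$ and $\sigma\mapsto\sigma|y|^\nu$) by the per-step mass gives $V(t,y)\le|U(0,y)|^2\sum_{n\ge 0}M^n=(1-M)^{-1}|U(0,y)|^2$, uniformly in $t$ and $y$, which is \eqref{F-WP}. The essential point is the borderline $M=1$ allowed by the inequality in \eqref{restr1s-eq}, where this geometric bound collapses. Here I would exploit self-similarity: with $\lambda:=(b|y|^\alpha)^{1/\beta}$ and the substitution $s=\phi/\lambda$, the equation for $v(\theta):=V(\theta/\lambda,y)$ becomes the $y$-independent renewal equation $v=|U(0,y)|^2 h+\kappa\ast v$, where $h(\theta)=E_\beta^2(-\theta^\beta)$ and $\kappa(\psi)=\sigma^2 b^{-2(1-\varepsilon)}\psi^{2(\beta-\gamma)}E_{\beta,\beta-\gamma+1}^2(-\psi^\beta)$ is a probability density ($\int\kappa=M=1$). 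Thus $V(t,y)=|U(0,y)|^2\,\tilde v(\lambda t)$ with $\tilde v=h+\kappa\ast\tilde v$ not depending on $y$, and since $\lambda t$ sweeps all of $[0,\infty)$ as $|y|\to\infty$, property \eqref{F-WP} is equivalent to boundedness of $\tilde v$ on $[0,\infty)$.

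I expect the boundedness of $\tilde v$ in this critical case to be the main obstacle. Since $h$ is continuous, nonnegative and decreasing to $0$ (by \eqref{ML-asympt1}) and $\kappa$ is a probability density, $\tilde v=U\ast h$ for the renewal measure $U=\sum_{n\ge 0}\kappa^{\ast n}$; when $\beta>1/2$ the forcing $h\sim C\theta^{-2\beta}$ is directly Riemann integrable and the key renewal theorem gives a finite limit for $\tilde v$, hence boundedness, whereas when $\beta\le 1/2$ both $\int_0^\infty h$ and the mean $\int_0^\infty\psi\,\kappa(\psi)\,d\psi$ are infinite and one must instead run a Tauberian argument on $\cL[\tilde v]=\cL[h]/(1-\cL[\kappa])$, using the tail $\kappa(\psi)\sim C\psi^{-1-2\varepsilon\beta}$ (from \eqref{ML-asympt1-1}) to obtain $1-\cL[\kappa](\lambda)\asymp\lambda^{2\varepsilon\beta}$ as $\lambda\to 0^+$ and concluding $\tilde v(\theta)=O\!\big(\theta^{-2\beta(1-\varepsilon)}\big)\to 0$, which is bounded (here \eqref{restr2} guarantees $1-\varepsilon>0$). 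Threading these regularly-varying estimates through all the sub-cases is the delicate, computational heart of the argument.

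Finally, for the non-well-posedness when $\gamma=\tfrac12$ and $\alpha=\nu$ I would run the same rescaling. Now $2\beta-2\gamma+1=2\beta$, the prefactor again collapses to the $y$-independent constant $\sigma^2/b^2$, and the rescaled kernel becomes $\kappa(\psi)=\tfrac{\sigma^2}{b^2}\psi^{2\beta-1}E_{\beta,\beta+1/2}^2(-\psi^\beta)$, whose tail $\kappa(\psi)\sim\tfrac{\sigma^2}{\pi b^2}\psi^{-1}$ (by \eqref{ML-asympt1-1}, since $\Gamma(1/2)^2=\pi$) makes $\int_0^\infty\kappa=\sigma^2 b^{-2}\sigma^2_\infty(1,\beta,\tfrac12)=\infty$; this is exactly the logarithmic divergence flagged in \eqref{OU-asypt0}. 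Consequently $\cL[\kappa](\lambda)\sim\tfrac{\sigma^2}{\pi b^2}\log(1/\lambda)\to\infty$ as $\lambda\to 0^+$, so there is a unique $\lambda_0>0$ with $\cL[\kappa](\lambda_0)=1$; this positive (Malthusian) pole of $\cL[\tilde v]$ forces exponential growth $\tilde v(\theta)\sim C e^{\lambda_0\theta}$, whence $V(t,y)\ge|U(0,y)|^2\,\tilde v(\lambda t)$ is unbounded as $|y|\to\infty$ for every fixed $t>0$, contradicting \eqref{F-WP}.
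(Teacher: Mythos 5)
Your reduction to the scalar second-moment Volterra equation, the scaling identity $\sigma^2|y|^{2\nu}\int_0^\infty\Phi_y^2(\tau)\,d\tau=\sigma^2b^{-2(1-\varepsilon)}\sigma^2_\infty(1,\beta,\gamma)=:M$, and the observation that \eqref{restr1s-eq} is precisely $M\le 1$ all coincide with what actually drives the paper's proof. Your handling of the strictly subcritical regime $M<1$ (Neumann series, giving $V\le(1-M)^{-1}|U(0,y)|^2$) is correct, and your non-well-posedness argument for $\gamma=1/2$, $\alpha=\nu$ (rescaling to a $y$-independent renewal equation whose kernel has infinite total mass, hence a Malthusian root $\lambda_0>0$ of $\cL[\kappa](\lambda)=1$ and exponential growth of $\tilde v$) is a genuinely different and more structural route than the paper's, which simply discards the Volterra feedback in \eqref{F-SPDE-FD-sq-m}, keeps the inhomogeneous forcing term, rescales by $(b|y|^{\alpha})^{1/\beta}$, and invokes divergence of the resulting integral as $T\to\infty$ via \eqref{ML-asympt1-1}.

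The genuine gap is the critical case $M=1$, which is the entire point of allowing equality in \eqref{restr1s-eq} (and of recovering the sharp condition $2b\ge\sigma^2$ when $\beta=\gamma=1$, $\alpha=2$, $\nu=1$). You defer this to ``the key renewal theorem'' for $\beta>1/2$ and to an unexecuted Tauberian argument for $\beta\le 1/2$, and you acknowledge that threading the estimates through is the unfinished heart of the matter; as written this is not a proof. It is also more delicate than your sketch suggests: passing from $1-\cL[\kappa](\lambda)\asymp\lambda^{2\varepsilon\beta}$ to the pointwise bound $\tilde v(\theta)=O(\theta^{-2\beta(1-\varepsilon)})$ requires local (density-level) asymptotics of the renewal measure $\sum_n\kappa^{*n}$ for an infinite-mean, regularly varying kernel, i.e.\ a strong renewal theorem, which does not hold unconditionally for small tail index. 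The paper sidesteps all of this with an elementary device you are missing: in \eqref{AUX1} the convolution is split at $t-\varpi$ with the $y$-dependent scale $\varpi=|y|^{-\varepsilon\alpha/\beta}$. On $[0,t-\varpi]$ the estimate \eqref{MainEst} together with $t-s\ge\varpi$ yields a Gronwall kernel $C(t-s)^{2(\beta-\gamma)}$ whose constant is independent of $y$ (this is where \eqref{restr2} is used), while on $[t-\varpi,t]$ monotonicity of $V$ bounds the contribution by $V(t)$ times the truncated mass $\sigma^2b^{-2(1-\varepsilon)}\int_0^{b^{1/\beta}|y|^{\nu/\beta}}\tau^{2(\beta-\gamma)}E^2_{\beta,\beta-\gamma+1}(-\tau^{\beta})\,d\tau$, which is strictly less than $M\le 1$, so that this term can be absorbed into the left-hand side and Proposition \ref{prop-FGBI} applied to what remains. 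To complete your write-up you should either prove the critical-case boundedness of $\tilde v$ in full or replace that step by this splitting-and-absorption argument.
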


\begin{proof}
Similar to the proof of the previous theorem, we need to study
equality \eqref{F-SPDE-FD-sq} for $|y|>1$, so we fix $y$
with $|y|$ sufficiently large  and define
$$ V(t)=\bE|U(t,y)|^2-|U(0,y)|^2E^2_{\beta}(-b|y|^{\alpha}t^{\beta}).
$$
Then  $V$ is non-decreasing in $t$ and satisfies
\bel{F-SPDE-FD-sq-m}
\begin{split}
V(t)&=
\sigma^2|y|^{2\nu}\int_0^t
(t-s)^{2(\beta-\gamma)} E^2_{\beta,\beta-\gamma+1}
\big(-b|y|^{\alpha}(t-s)^{\beta}\big)V(s)\,ds\\
&+|U(0,y)|^2\
\sigma^2|y|^{2\nu}\int_0^t
s^{2(\beta-\gamma)} E^2_{\beta,\beta-\gamma+1}
\big(-b|y|^{\alpha}s^{\beta}\big)
E^2_{\beta}\big(-b|y|^{\alpha}(t-s)^{\beta}\big)\,ds.
\end{split}
\ee

If $\gamma=1/2$, we re-write \eqref{F-SPDE-FD-sq-m} as
$$
V(t)\geq
|U(0,y)|^2
\sigma^2|y|^{2\nu}\int_0^t
s^{2\beta-1} E^2_{\beta,\beta-\gamma+1}
\big(-b|y|^{\alpha}s^{\beta}\big)
E^2_{\beta}\big(-b|y|^{\alpha}(t-s)^{\beta}\big)\,ds.
$$
Changing the variables
$T=(b|y|^{\alpha})^{1/\beta}\, t$,
$\tau=(b|y|^{\alpha})^{1/\beta}s$,
and keeping in mind that $\nu=\alpha$,
$$
V(t)\geq
|U(0,y)|^2
\sigma^2b^{-2}\int_0^T
\tau^{2\beta-1} E^2_{\beta,\beta-\gamma+1}
\big(-\tau^{\beta}\big)
E^2_{\beta}\big( (T-\tau)^{\beta}\big)\,d\tau.
$$
Because $\lim_{|y|\to \infty} T=+\infty$ and, by
\eqref{ML-asympt1-1}, the last integral diverges at infinity,
we conclude that \eqref{F-WP} cannot hold.

Next, consider the case  $\gamma\in (1/2,1]$ under the
assumptions  \eqref{restr1s-eq}. The same computations as
in the case $\gamma=1/2$ show that now the
second integral on the right-hand side of  \eqref{F-SPDE-FD-sq-m}
is uniformly bonded in $|y|$. To analyze the first integral,
write
$$
\varepsilon=\frac{\gamma-(1/2)}{\beta},\
\varpi=|y|^{-\varepsilon \alpha/\beta},\
$$
so that $\nu=\alpha(1-\varepsilon)$, and
\bel{AUX1}
\begin{split}
&\sigma^2|y|^{2\nu}\int_0^t
(t-s)^{2(\beta-\gamma)} E^2_{\beta,\beta-\gamma+1}
\big(-b|y|^{\alpha}(t-s)^{\beta}\big)V(s)\,ds\\
&=\sigma^2|y|^{2\nu}
\left(\int_0^{t-\varpi}+\int_{t-\varpi}^t\right)
(t-s)^{2(\beta-\gamma)} E^2_{\beta,\beta-\gamma+1}
\big(-b|y|^{\alpha}(t-s)^{\beta}\big)V(s)\,ds.
\end{split}
\ee
We use \eqref{MainEst} and $t-s\geq \varpi$
to bound the first integral on the
right-hand side of \eqref{AUX1} by
$$
\frac{C\sigma^2 |y|^{2\nu}}{\big(b|y|^{\alpha}\varpi^{\beta}\big)^2}
\int_0^{t-\varpi}(t-s)^{2(\beta-\gamma)}V(s)\, ds
\leq C \int_0^t (t-s)^{2(\beta-\gamma)}V(s)\, ds,
$$
which, by \eqref{restr2}, allows an application of
Proposition \ref{prop-FGBI}.
For the second integral, we use monotonicity of $V$ to get an
upper bound
\begin{align*}
V(t)\, &\sigma^2|y|^{2\nu} \int_{t-\varpi}^t
(t-s)^{2(\beta-\gamma)} E^2_{\beta,\beta-\gamma+1}
\big(-b|y|^{\alpha}(t-s)^{\beta}\big) \,ds\\
&=
V(t)\, \sigma^2|y|^{2\nu} \int_0^{\varpi}
s^{2(\beta-\gamma)} E^2_{\beta,\beta-\gamma+1}
\big(-b|y|^{\alpha}s^{\beta}\big) \,ds,
\end{align*}
which, after the change of variable $\tau=(b|y|^{\alpha})^{1/\beta}s$
becomes
$$
V(t)\, \frac{\sigma^2}{b^{2(1-\varepsilon)}}
\int_0^{b^{1/\beta}|y|^{\nu/\beta}}
\tau^{2(\beta-\gamma)} E^2_{\beta,\beta-\gamma+1}
\big(-\tau^{\beta}\big) \,d\tau
<V(t)\,\frac{\sigma^2}{b^{2(1-\varepsilon)}}\,
\sigma_{\infty}^2(1,\beta,\gamma).
$$
By assumption,
$$
\frac{\sigma^2}{b^{2(1-\varepsilon)}}\,
\sigma_{\infty}^2(1,\beta,\gamma)\leq 1,
$$
and then \eqref{F-WP} follows from \eqref{F-SPDE-FD-sq-m}.
\end{proof}

As a final comment, note that, while the proof of Theorem
\ref{th-SFPC-eq} suggests that \eqref{restr1s-eq} might not
be sharp, we do get the optimal bound \eqref{WP-main}
 when $\alpha=2$, $\nu=1$,
$\beta=\gamma=1$, and $\varepsilon=1/2$, because,
with $E_{1,1}(t)=e^t$,
$$
\sigma_{\infty}^2(1,1,1)=\int_0^{\infty} e^{-2t}\, dt=\frac{1}{2}.
$$

\def\cprime{$'$}
\providecommand{\bysame}{\leavevmode\hbox to3em{\hrulefill}\thinspace}
\providecommand{\MR}{\relax\ifhmode\unskip\space\fi MR }
\providecommand{\MRhref}[2]{%
  \href{http://www.ams.org/mathscinet-getitem?mr=#1}{#2}
}
\providecommand{\href}[2]{#2}


\end{document}